\newcommand{\tops}{\texorpdfstring}
\newcommand{\R}{\mathbb{R}}
\newcommand{\ur}[1]{\mathrm{#1}}
\newcommand{\ure}{\ur e}
  \renewcommand{\labelenumi}{(\roman{enumi})}
\newcommand{\eps}{\varepsilon}
\newcommand{\gt}{>}
\newcommand{\lt}{<}
\newcommand{\defs}{\coloneqq}
\newcommand{\sfed}{\eqqcolon}
\newcommand{\ra}{\rightarrow}
\newcommand{\nea}{\nearrow}
\newcommand{\sea}{\searrow}
\newcommand{\ol}{\overline}
\newcommand{\ds}{\,\mathrm{d}s}
\newcommand{\dsigma}{\,\mathrm{d}\sigma}
\newcommand{\drho}{\,\mathrm{d}\rho}
\newcommand{\ddt}{\frac{\mathrm{d}}{\mathrm{d}t}}
\newcommand{\hp}{\hphantom}
\newcommand{\pe}{\mathrel{\hp{=}}}
\newcommand{\tmax}{T_{\max}}
\newcommand{\intom}{\int_\Omega}
\newcommand{\Ombar}{\ol \Omega}
\newcommand{\leb}[1]{{L^{#1}(\Omega)}}
\newcommand{\con}[1]{{C^{#1}(\Ombar)}}
\let\originalparagraph\paragraph
\renewcommand{\paragraph}[2][.]{\originalparagraph{#2#1}}
\renewenvironment{proof}[1][\proofname]{\par
  \pushQED{\qed}%
  \normalfont \topsep0\p@\relax
  \trivlist
  \item[\hskip\labelsep\scshape
  #1\@addpunct{.}]\ignorespaces
}{%
  \popQED\endtrivlist\@endpefalse
}
\newtheorem{base}{Base}[section]
\numberwithin{equation}{section}
\theoremstyle{plain}
\newtheorem{theorem}[base]{Theorem} \newtheorem*{theorem*}{Theroem}
\newtheorem{lemma}[base]{Lemma} \newtheorem*{lemma*}{Lemma}
 \newtheorem*{prop*}{Proposition}
 \newtheorem*{cor*}{Corollary}
\theoremstyle{definition}
 \newtheorem*{definition*}{Definition}
 \newtheorem*{example*}{Example}
 \newtheorem*{cond*}{Condition}
 \newtheorem*{remark*}{Remark}
\begin{document}
\setkomafont{title}{\normalfont \Large}
\title{Approaching optimality in blow-up results for Keller--Segel systems with logistic-type dampening}
\author{
Mario Fuest\footnote{fuestm@math.uni-paderborn.de}\\
{\small Institut f\"ur Mathematik, Universit\"at Paderborn,}\\
{\small 33098 Paderborn, Germany}
}
\date{}
\maketitle

\KOMAoptions{abstract=true}
\begin{abstract}
  \noindent
  Nonnegative solutions of the 
  Neumann initial-boundary value problem for the chemotaxis system
  \begin{align}\label{prob:star}\tag{$\star$}
    \begin{cases}
      u_t = \Delta u - \nabla \cdot (u \nabla v) + \lambda u - \mu u^\kappa, \\
      0 = \Delta v - \overline m(t) + u, \quad \overline m(t) = \frac1{|\Omega|} \int_\Omega u(\cdot, t)
    \end{cases}
  \end{align}
  in smooth bounded domains $\Omega \subset \mathbb R^n$, $n \ge 1$,
  are known to be global-in-time if $\lambda \geq 0$, $\mu > 0$ and $\kappa > 2$.\\[0.5pt]
  In the present work, we show that the exponent $\kappa = 2$ is actually critical in the four- and higher dimensional setting.
  More precisely, if
  \begin{alignat*}{3}
             \qquad n &\geq 4, &&\quad \kappa \in (1, 2) \quad &&\text{and} \quad \mu > 0 \\
    \text{or}\qquad n &\geq 5, &&\quad \kappa = 2        \quad &&\text{and} \quad \mu \in \left(0, \frac{n-4}{n}\right),
  \end{alignat*}
  for balls $\Omega \subset \mathbb R^n$ and parameters $\lambda \geq 0$, $m_0 > 0$,
  we construct a nonnegative initial datum $u_0 \in C^0(\overline \Omega)$ with $\int_\Omega u_0 = m_0$
  for which the corresponding solution $(u, v)$ of \eqref{prob:star} blows up in finite time.
  Moreover, in 3D, we obtain finite-time blow-up for $\kappa \in (1, \frac32)$ (and $\lambda \geq 0$, $\mu > 0$).\\[0.5pt]
  As the corner stone of our analysis,
  for certain initial data,
  we prove that the mass accumulation function $w(s, t) = \int_0^{\sqrt[n]{s}} \rho^{n-1} u(\rho, t) \,\mathrm d\rho$
  fulfills the estimate $w_s \le \frac{w}{s}$.
  Using this information, we then obtain finite-time blow-up of $u$
  by showing that for suitably chosen initial data, $s_0$ and $\gamma$,
  the function $\phi(t) = \int_0^{s_0} s^{-\gamma} (s_0 - s) w(s, t)$ cannot exist globally.\\[0.5pt]
  \textbf{Key words:} {chemotaxis; finite-time blow-up; logistic source}\\
  \textbf{MSC (2020):} {35B44 (primary); 35B33, 35K65, 92C17 (secondary)}
\end{abstract}
\section{Introduction}
A considerable amount of the literature on chemotaxis systems deals with detecting critical parameters distinguishing
between global existence and finite-time blow-up.
Such a dichotomy is already present in the \emph{minimal Keller--Segel system}
\begin{align}\label{prob:ks}
  \begin{cases}
    u_t = \Delta u - \nabla \cdot (u \nabla v), \\
    v_t = \Delta v - v + u
  \end{cases}  
\end{align}
proposed by Keller and Segel to model chemotactic behavior of bacteria attracted by a chemical substance they produce themselves~%
\cite{KellerSegelTravelingBandsChemotactic1971}.
Considered in two-dimensional balls, the mass of $u_0$ is critical:
If the initial datum $u_0$ is sufficiently regular, radially symmetric and satisfies $\intom u_0 \lt 8\pi$,
then the corresponding solutions are global-in-time and bounded~\cite{NagaiEtAlApplicationTrudingerMoserInequality1997}
while for any $m_0 \gt 8\pi$, there exists $u_0 \in \con0$ with $\intom u_0 = m_0$ leading to finite-time blow-up~%
\cite{HerreroVelazquezBlowupMechanismChemotaxis1997, MizoguchiWinklerBlowupTwodimensionalParabolic}. 
(See also \cite{NagaiBlowupRadiallySymmetric1995} for corresponding results in a parabolic--elliptic simplification of \eqref{prob:ks}.)
Let us note that this specific critical mass phenomenon is limited to the two-dimensional setting:
While solutions to \eqref{prob:ks} are always global-in-time an bounded 
if considered in one-dimensional domains \cite{OsakiYagiFiniteDimensionalAttractor2001},
in the spatially higher dimensional cases, finite-time blow-up has been detected
even for arbitrary positive initial masses \cite{WinklerFinitetimeBlowupHigherdimensional2013}.

Other dichotomies between boundedness and blow-up include critical exponents both
for nonlinear diffusion as well as nonlinear sensitivity~\cite{HorstmannWinklerBoundednessVsBlowup2005}
and nonlinear signal production~\cite{WinklerCriticalBlowupExponent2018}.
Instead of presenting them in detail here,
we refer to the surveys \cite{BellomoEtAlMathematicalTheoryKeller2015} and \cite{LankeitWinklerFacingLowRegularity2019}
for a broader overview of chemotaxis systems and related results.

Aiming to further enhance our understanding of the exact strength of the destabilising taxis term,
in this article, we present another critical parameter distinguishing between global existence and finite-time blow-up,
namely the exponent $\kappa = 2$ in Keller--Segel systems with logistic-type degradation.

Before stating our main result, let us introduce systems featuring such dampening terms and recall some of the corresponding results.
That is, we will first consider the \emph{Keller--Segel system with logistic source}
\begin{align}\label{prob:pe|pp_ls}
  \begin{cases}
    u_t = \Delta u - \nabla \cdot (u \nabla v) + \lambda u - \mu u^2, \\
    \tau v_t = \Delta v - v + u
  \end{cases}
\end{align}
in smooth, bounded domains $\Omega \subset \R^n$, $n \ge 2$, and given parameters $\lambda \in \R$, $\mu \gt 0$ and $\tau \ge 0$.
(We note that in view of the global existence result for $\lambda = \mu = 0$ in one-dimensional domains mentioned above,
at least for the question whether finite-time blow-up occurs, we may confine ourselves to the assumption $n \ge 2$.)
The system \eqref{prob:pe|pp_ls} and variations thereof describe several biological processes
such as
population dynamics~\cite{HillenPainterUserGuidePDE2009, ShigesadaEtAlSpatialSegregationInteracting1979},
pattern formation~\cite{WoodwardEtAlSpatiotemporalPatternsGenerated1995}
or embryogenesis~\cite{PainterEtAlDevelopmentApplicationsModel2000}
(see also \cite{HillenPainterUserGuidePDE2009} for an overview).

Already in 2007, Tello and Winkler showed that for $\tau = 0$, any $\lambda \in \R, \mu \gt \frac{n-2}{n}$ and any reasonably smooth initial data,
the system \eqref{prob:pe|pp_ls} possesses global, bounded classical solutions \cite{TelloWinklerChemotaxisSystemLogistic2007}.
Moreover, for $n \ge 3$ and $\mu = \frac{n-2}{n}$ (and again $\tau = 0$ and at least $\lambda \ge 0$)
solutions to \eqref{prob:pe|pp_ls} are global-in-time~\cite{KangStevensBlowupGlobalSolutions2016},
but to the best of our knowledge it is unknown whether these are also always bounded.
For the parabolic--parabolic case, that is, for $\tau \gt 0$, the situation is similar:
In the two-dimensional setting, assuming merely $\mu \gt 0$ suffices to guarantee global existence of classical solutions~\cite{OsakiEtAlExponentialAttractorChemotaxisgrowth2002}, even for dampening terms growing slightly slower then quadratically~\cite{XiangSublogisticSourceCan2018}.
Moreover, for higher dimensional convex domains, global classical solutions have been constructed for $\mu \gt \mu_0$ for some $\mu_0 \gt 0$ in \cite{WinklerBoundednessHigherdimensionalParabolic2010},
where explicit upper bounds of $\mu_0$ then have been derived in \cite{MuLinGlobalDynamicsFully2016, XiangHowStrongLogistic2018}
and the convexity assumption has been removed in \cite{XiangChemotacticAggregationLogistic2018} at the cost of worsening the condition on~$\mu$.
In all these settings, however, the known upper bounds for $\mu_0$ are larger than $\frac{n-2}{n}$.

%Moreover, for the case of indirect signal production,
%at least in the three-dimensional setting, a merely positive quadratic dampening term suffices to exclude the possibility of finite-time blow-up~%
%\cite{HuTaoExclusionBlowupThreedimensional2016, LiTaoBoundednessChemotaxisSystem2018}.

However, if one resorts to more general solution concepts, further existence results are available.
Under rather mild conditions,
global weak solutions have been constructed
in \cite{TelloWinklerChemotaxisSystemLogistic2007} and \cite{LankeitEventualSmoothnessAsymptotics2015}
for the cases $\tau = 0$ and $\tau = 1$, respectively.
Moreover, if the degradation term $-\mu u^2$ in \eqref{prob:pe|pp_ls} is replaced by a weaker but still sufficiently strong superlinear dampening term,
global generalized solutions have been obtained,
again both for the parabolic--elliptic~\cite{WinklerChemotaxisLogisticSource2008} 
and the fully parabolic case~\cite{WinklerRoleSuperlinearDamping2019, WinklerSolutionsParabolicKeller2020, YanFuestWhenKellerSegel2020}.

%It is known that certain instantaneously smoothing effects occur for irregular initial data~%
%\cite{LankeitImmediateSmoothingGlobal2020, WinklerHowStrongSingularities2019}.

On the other hand, it has been observed that despite the presence of quadratic dampening terms,
structures may form on intermediate time scales which even surpass so-called population thresholds to an arbitrary high extent
(cf.~\cite{KangStevensBlowupGlobalSolutions2016, LankeitChemotaxisCanPrevent2015, WinklerHowFarCan2014} for the parabolic--elliptic
and \cite{WinklerEmergenceLargePopulation2017} for the parabolic--parabolic case).

While these findings already show that the aggregating effect of the chemotaxis term
is strongly countered although not completely nullified by quadratic degradation terms,
the question arises whether the most drastic form of spatial aggregation---finite-time blow-up---%
still occurs in Keller--Segel systems with superlinear degradation terms.
A first partial (and affirmative) answer has been given in \cite{WinklerBlowupHigherdimensionalChemotaxis2011}:
There, the compared to \eqref{prob:pe|pp_ls} with $\tau = 0$ slightly simplified system
\begin{align}\label{prob:jl_ls}
  \begin{cases}
    u_t = \Delta u - \nabla \cdot (u \nabla v) + \lambda u - \mu u^\kappa, \\
    0 = \Delta v - \ol m(t) + u, \quad \ol m(t) \defs \frac{1}{|\Omega|} \intom u(\cdot, t)
  \end{cases}
\end{align}
is considered in balls $\Omega \subset \R^n$, $n \ge 5$
and, for any $\lambda \ge 0$ and $\kappa \in (1, \frac32 + \frac1{2(n-1)})$,
initial data leading to finite-time blow-up are constructed.
The second important finding in this direction transfers this result
to physically meaningful space dimensions.
More concretely, \cite{WinklerFinitetimeBlowupLowdimensional2018} detects finite-time blow-up even in the system~\eqref{prob:pe|pp_ls} with $\tau = 0$
(inter alia) for balls $\Omega \in \R^n$, $n \in \{3, 4\}$, $\lambda \ge 0$ and $\kappa \in (1, \frac76)$.

Recently, the regime of exponents allowing for finite-time blow-up in \eqref{prob:jl_ls} has been further widened
to $\kappa \gt \frac43$ and $\kappa \gt \frac32$ in the three- and four-dimensional settings, respectively \cite{BlackEtAlRelaxedParameterConditions2020}.
Moreover, in planar domains, chemotactic collapse can be obtained if one replaces the term $-u^\kappa$ in \eqref{prob:jl_ls}
with certain heterogeneous dampening terms such as $-|x|^2 u^2$~%
\cite{FuestFinitetimeBlowupTwodimensional2020}.
(Let us additionally note that similar finite-time blow-up results are also available for systems with nonlinear diffusion~%
\cite{BlackEtAlRelaxedParameterConditions2020, LinEtAlBlowupResultQuasilinear2018}
or sublinear taxis sensitivity \cite{TanakaYokotaBlowParabolicElliptic2020}.)

\paragraph{Main results}
At least in the four- and higher dimensional settings,
the journey of detecting finite-time blow-up in \eqref{prob:jl_ls} for ever increasing values of $\kappa$
comes to an end with the present article;
we obtain the corresponding result up to (and for $n \ge 5$ even including) the optimal exponent $\kappa = 2$.
 
More precisely, our main result reads
\begin{theorem}\label{th:main}
  Suppose
  \begin{subequations}\label{eq:main:cond}
  \begin{alignat}{3}
                    n &\ge 3, &&\quad \kappa \in \left(1, \min\left\{2, \frac n2\right\}\right) \quad &&\text{and} \quad \mu \gt 0 \label{eq:main:cond:1} \\
    \text{or}\qquad n &\ge 5, &&\quad \kappa = 2        \quad &&\text{and} \quad \mu \in \left(0, \frac{n-4}{n}\right). \label{eq:main:cond:2}
  \end{alignat}
  \end{subequations}
  Moreover, let $R \gt 0$, $\Omega \defs B_R(0) \subset \R^n$, $m_0 \gt 0$, $m_1 \in (0, m_0)$ and $\lambda \ge 0$.
  Then there exists $r_1 \in (0, R)$ 
  such that whenever
  \begin{align}\label{eq:main:reg_u0}
    u_0 \in \con1 \quad \text{is positive, radially symmetric as well as radially decreasing}
  \end{align}
  and fulfills
  \begin{align}\label{eq:main:mass_u0}
    \intom u_0 = m_0
    \quad \text{as well as} \quad
    \int_{B_{r_1}(0)} u_0 \ge m_1,
  \end{align}
  the following holds:
  There exist $\tmax \lt \infty$ and a classical solution
  \begin{align}\label{eq:main:uv_reg}
    (u, v) \in \left( C^0(\Ombar \times [0, \tmax)) \cap C^{2, 1}(\Ombar \times (0, \tmax)) \right)^2
  \end{align}
  of
  \begin{align}\label{prob}
    \begin{cases}
      u_t = \Delta u - \nabla \cdot (u \nabla v) + \lambda u - \mu u^\kappa                 & \text{in $\Omega \times (0, \tmax)$}, \\
      0 = \Delta v - \ol m(t) + u, \quad \ol m(t) \defs \frac1{|\Omega|} \intom u(\cdot, t) & \text{in $\Omega \times (0, \tmax)$}, \\
      \partial_\nu u = \partial_\nu v = 0                                                   & \text{on $\partial \Omega \times (0, \tmax)$}, \\
      u(\cdot, 0) = u_0                                                                     & \text{in $\Omega$},
    \end{cases}
  \end{align}
  which blows up at $\tmax$ in the sense that $\lim_{t \nea \tmax} u(0, t) = \infty$.
\end{theorem}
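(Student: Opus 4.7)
The approach follows the by-now-standard route for radial Keller--Segel problems: pass to the mass accumulation function
\begin{equation*}
  w(s, t) \defs \int_0^{s^{1/n}} \rho^{n-1} u(\rho, t) \dr,
\end{equation*}
build a weighted moment functional of $w$, and extract a superlinear ODI forcing finite-time blow-up. Standard parabolic theory provides a maximal radial classical solution $(u, v)$ satisfying \eqref{eq:main:uv_reg}, and an extensibility criterion, together with the preserved radial decrease of $u$, ensures that $\tmax \lt \infty$ implies $u(0, t) \ra \infty$ as $t \nea \tmax$. Writing $u(r, t) = n w_s(r^n, t)$ and computing $\partial_r v$ from $\Delta v = \ol m - u$, a direct calculation turns \eqref{prob} into
\begin{equation*}
  w_t = n^2 s^{2 - 2/n} w_{ss} + n w w_s - \ol m(t) s w_s + \lambda w - \mu n^{\kappa - 1} \int_0^s w_s^\kappa(\sigma, t) \dsigma
\end{equation*}
on $(0, R^n) \times (0, \tmax)$, with $w(0, t) = 0$.

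\textbf{The monotonicity estimate.} The corner stone announced in the abstract is that, for radially decreasing $u_0$, the estimate $w_s \le w/s$ propagates to all $t \in [0, \tmax)$. To establish this, I would introduce $z(s, t) \defs w(s, t) - s w_s(s, t)$ and verify, using the scalar equation above, that $z$ satisfies a linear parabolic equation whose coefficients are controlled by $w$ and its derivatives; then a parabolic maximum principle (after checking $z(\cdot, 0) \ge 0$ from \eqref{eq:main:reg_u0} and handling the endpoints $s = 0$ and $s = R^n$ carefully) yields $z \ge 0$ throughout $[0, \tmax)$. This pointwise estimate is precisely what will let me reach the critical exponent $\kappa = 2$.

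\textbf{The functional and the differential inequality.} For parameters $s_0 \in (0, R^n)$ and $\gamma \in (0, 1)$ to be chosen, set
\begin{equation*}
  \phi(t) \defs \intns s^{-\gamma} (s_0 - s) w(s, t) \ds.
\end{equation*}
Differentiating and integrating by parts, the diffusion term yields boundary and bulk pieces that remain controlled thanks to the weight $s^{2 - 2/n}$ near $s = 0$; the chemotactic contribution $n w w_s = \tfrac{n}{2}(w^2)_s$ produces, after one integration by parts, the positive gain
\begin{equation*}
  \frac{n\gamma}{2} \intns s^{-\gamma - 1}(s_0 - s) w^2 \ds + \frac{n}{2} \intns s^{-\gamma} w^2 \ds;
\end{equation*}
the $\ol m$ and $\lambda w$ pieces contribute at most linearly in $\phi$; and Fubini rewrites the logistic loss as $-\mu n^{\kappa - 1} \intns \Psi(\sigma) w_s^\kappa(\sigma, t) \dsigma$ with $\Psi(\sigma) \defs \int_\sigma^{s_0} s^{-\gamma}(s_0 - s) \ds$. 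Now invoking $w_s \le w/s$ together with the uniform bound $w \le m_0/(n\omega_n)$ (which follows from the a priori mass estimate stemming from the logistic source), I estimate
\begin{equation*}
  w_s^\kappa \le \left(\frac{m_0}{n\omega_n}\right)^{\kappa - 1} s^{1 - \kappa} w_s,
\end{equation*}
turning the logistic loss into a weighted first moment of $w$ of the same structural type as the chemotactic gain. A Cauchy--Schwarz estimate
\begin{equation*}
  \phi(t)^2 \le \left(\intns s^{-\gamma - 1}(s_0 - s) w^2 \ds\right)\left(\intns s^{-\gamma + 1}(s_0 - s) \ds\right)
\end{equation*}
converts the dominant chemotactic contribution into $\ge c_1 \phi(t)^2$, and absorbing the linear remainders finally yields
\begin{equation*}
  \phi'(t) \ge c_1 \phi(t)^2 - c_2 \quad \text{for all } t \in [0, \tmax),
\end{equation*}
with $c_1, c_2 \gt 0$ depending on $s_0, \gamma, \mu, \lambda$ and $m_0$.

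\textbf{Conclusion and main obstacle.} Given such an ODI, the mass concentration assumption \eqref{eq:main:mass_u0} lets me choose $r_1$ (equivalently $s_0 = r_1^n$) small enough so that $\phi(0) \gt \sqrt{c_2/c_1}$, forcing $\phi \ra \infty$ in finite time and hence $\tmax \lt \infty$; combined with radial monotonicity this gives $u(0, t) \ra \infty$. The crux of the argument lies in the balance of the previous paragraph: once $w_s \le w/s$ has been exploited, the logistic loss and the chemotactic gain are integrals of the same structural type, and the conditions \eqref{eq:main:cond} are precisely the thresholds at which, for some admissible $\gamma$, the latter dominates. For $\kappa \lt 2$ this gain is strict; at $\kappa = 2$ the two terms have the same scaling and it is the coefficient condition $\mu \lt \frac{n-4}{n}$ that allows the chemotaxis to prevail, while the constraint $\kappa \lt n/2$ in \eqref{eq:main:cond:1} arises from keeping boundary and diffusive contributions integrable near $s = 0$. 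Replacing $w_s \le w/s$ by the weaker bound $w_s \le \|u\|_{L^\infty}/n$ would reintroduce the suboptimal exponent thresholds of previous works, which is precisely why the estimate of Step~2 is indispensable.
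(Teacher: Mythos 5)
Your overall architecture (mass accumulation function, the estimate $w_s \le w/s$, the moment functional $\phi$, a superlinear ODI) matches the paper, but the treatment of the logistic term --- the very step where the paper gains the range $\kappa \le 2$ --- has a genuine gap. You fix $\gamma \in (0,1)$ and estimate $w_s^\kappa \le (m_0/(n\omega_n))^{\kappa-1} s^{1-\kappa} w_s$, i.e.\ you combine $w_s \le w/s$ with the crude mass bound $w \le C$. This is essentially the pointwise bound $u \le C|x|^{-n}$ already exploited in the earlier works, and it does not make the logistic loss ``of the same structural type as the chemotactic gain'': the gain is \emph{quadratic} in $w$ with weight $s^{-\gamma-1}(s_0-s)$, whereas your linearized loss, for $\kappa=2$, carries the weight $s^{-1}w_s \le s^{-2}w$. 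Trying to absorb it into the quadratic term by Cauchy--Schwarz requires $\int_0^{s_0} s^{\gamma-3}(s_0-s)\ds \lt \infty$, i.e.\ $\gamma \gt 2$, which is incompatible with $\gamma \lt 1$ (and with $\gamma \lt 2-\tfrac4n$, forced by the diffusion term); absorbing it linearly into $\phi$ fails as well because the weight is more singular than $s^{-\gamma}$. Moreover, since your bound drags in the constant $m_0$, no coefficient comparison can ever produce the sharp condition $\mu \lt \frac{n-4}{n}$ at $\kappa=2$, nor the threshold $\kappa \lt \min\{2,\tfrac n2\}$ (which in the paper arises from the nonemptiness of the admissible $\gamma$-interval, not from integrability of the diffusive boundary terms alone). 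With your setup one recovers at best the suboptimal exponents of the predecessors, contrary to your own closing remark.

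The paper's proof avoids exactly this: the logistic integral is never linearized. Using Fubini and then $w_s \le w/s$ \emph{once}, it is bounded by a constant multiple of $I_2 = n\intns s^{-\gamma}(s_0-s)\,w\,w_s\ds$ itself --- via $w_s^2 \le \tfrac ws\,w_s$ when $\kappa=2$, and via H\"older with exponents $\tfrac{2}{2-\kappa},\tfrac2\kappa$ when $\kappa\in(1,2)$ --- so that the loss is a fixed \emph{fraction} of the gain, independent of $m_0$. For $\kappa=2$ this fraction is $\tfrac{\mu}{\gamma-1}$, which is $\lt 1$ only if $\gamma \gt 1+\mu$; hence the second key novelty you are missing is that one must allow $\gamma \gt 1$ (compatible with the diffusion constraint $\gamma \lt 2-\tfrac4n$ precisely when $n\ge5$ and $\mu \lt \tfrac{n-4}{n}$), which also forces the more careful justification of differentiating $\phi$ carried out in Lemma~\ref{lm:phi}. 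Two further, smaller points: the uniform bound $w \le m_0/(n\omega_n)$ is false for $\lambda \gt 0$ (the mass grows like $\ure^{\lambda t}$; the paper restricts to $t\le1$ and keeps the factor), and your proposed maximum-principle argument for $z = w - sw_s$ is left unverified, whereas the paper obtains $w_s \le w/s$ directly from $u_r \le 0$ (maximum principle applied to $u_r$) together with the mean value theorem.
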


\paragraph{Main ideas}
Following Jäger and Luckhaus \cite{JagerLuckhausExplosionsSolutionsSystem1992},
we rely on the mass accumulation function given by $w(s, t) \defs \int_0^{\sqrt[n]s} \rho^{n-1} u(\rho, t) \drho$,
which transforms \eqref{prob} to a scalar equation, see Lemma~\ref{lm:def_w}.
The predecessors \cite{BlackEtAlRelaxedParameterConditions2020} and \cite{WinklerFinitetimeBlowupLowdimensional2018} of this article,
which deal with (variations of) the system \eqref{prob:jl_ls},
then proceed to show that the function $\phi$ defined by
\begin{align}\label{eq:intro:phi}
  \phi(s_0, t) \defs \int_0^{s_0} s^{-\gamma} (s_0 - s) w(s, t) \ds 
\end{align}
cannot, at least not for certain initial data, $s_0 \in (0, R^n)$ and $\gamma \in (0, 1)$,
exist globally in time, implying that $u$ must blow up in finite time.
One of the most challenging terms to estimate arises from the degradation term; one essentially has to control the integral
$\int_0^{s_0} w_s^{\kappa}(s, t) \ds$.
At this point, pointwise estimates for $w_s$ come in handy,
which due to the identity $w_s(s, t) = u(s^\frac1n, t)$ are available once pointwise estimates for $u$ are known.
These in turn can for instance be obtained by analyzing general parabolic equations in divergence form~%
\cite{FuestBlowupProfilesQuasilinear2020, WinklerBlowupProfilesLife} or by arguments similar to Lemma~\ref{lm:ws_est} below.
In fact, one of the main points in \cite{BlackEtAlRelaxedParameterConditions2020}
is to discuss how pointwise upper estimates for $u$ of the form $u(x) \le C |x|^{-p}$ influence the possibility to detect finite-time blow-up.

However, a natural limitation of this approach is the exponent $p = n$ since for fixed $C \gt 0$ and $p \lt n$,
nonnegative functions $u_0 \in \con0$ with $u_0(x) \le C |x|^{-p}$ cannot have their mass concentrated arbitrary close to the origin;
that is, depending on the value of $C$ and $p$, none of these functions may fulfill \eqref{eq:main:mass_u0}.
However, as seen in \cite{BlackEtAlRelaxedParameterConditions2020},
even the choice $p=n$ `only' yields finite-time blow-up for the system \eqref{prob:jl_ls} for certain $\kappa \lt \frac32$.

Thus, in the present article, where we handle exponents $\kappa \le 2$, we choose a slightly different path.
At the basis of our analysis stands Lemma~\ref{lm:ws_est}:
There, we derive the key estimate $w_s \le \frac{w}{s}$, % in $(0, R^n) \times (0, \tmax)$,
which due to $w(0, \cdot) \equiv 0$ actually improves on $w_s \le \frac{C}{s}$.
Its proof is surprisingly simple: As already observed in similar contexts
(cf.~\cite{BlackEtAlRelaxedParameterConditions2020, FuestFinitetimeBlowupTwodimensional2020, WinklerCriticalBlowupExponent2018}),
for radially decreasing initial data,
$w_s(\cdot, t)$ is decreasing for all times $t$, see Lemma~\ref{lm:ur_le_0}.
The desired estimate is then just a consequence of the mean value theorem.

Another major difference of our methods compared to
\cite{BlackEtAlRelaxedParameterConditions2020} and \cite{WinklerFinitetimeBlowupLowdimensional2018}
is that we do not limit our analysis of \eqref{eq:intro:phi} to $\gamma \in (0, 1)$ but also allow for parameters $\gamma$ being larger than $1$.
In the five- and higher dimensional settings, this will then allow us to obtain finite-time blow-up even for $\kappa = 2$.
(In 3D and 4D, the term stemming from the diffusion forces $\gamma$ to be smaller than $1$
and hence we cannot employ the same method as in higher dimensions.)
We also note that the realization of the idea of taking $\gamma \gt 1$
is made possible by the new crucial estimate $w_s \le \frac{w}{s}$.

The rest of the article is organized as follows:
After stating some preliminary results in Section~\ref{sec:prelim},
in Section~\ref{sec:w} we derive $w_s \le \frac{w}{s}$ in Lemma~\ref{lm:ws_est}.
Section~\ref{sec:phi} then starts with the definition of the function $\phi$ and a calculation of its derivative, see Lemma~\ref{lm:phi},
Next, in the Lemma~\ref{lm:i4}, we suitably estimate the term originating in the logistic source,
before dealing with the remaining terms and the initial datum of $\phi$ in the subsequent lemmata.
In Lemma~\ref{lm:tmax_le_12}, we then finally prove finiteness of the maximal existence time $\tmax$.

\section{Preliminaries}\label{sec:prelim}
In the sequel, we fix $n \ge 3$, $R \gt 0$, $\Omega \defs B_R(0) \subset \R^n$, $\kappa \in (1, 2]$, $\lambda \ge 0$ and $\mu \gt 0$.
\begin{lemma}\label{lm:local_ex}
  Suppose $u_0$ complies with $\eqref{eq:main:reg_u0}$.
  There exists $\tmax \in (0, \infty]$ and a unique pair $(u, v)$ of regularity \eqref{eq:main:uv_reg} which solves \eqref{prob} classically
  and is such that if $\tmax \lt \infty$, then $\lim_{t \nea \tmax} \|u(\cdot, t)\|_{\leb\infty} = \infty$.
  Moreover, both $u$ and $v$ are radially symmetric and $u$ is positive in $\Ombar \times [0, \tmax)$.
\end{lemma}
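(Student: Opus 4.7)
The proof would follow a standard pattern for parabolic--elliptic chemotaxis systems. The plan is first to rewrite the system as a scalar problem for $u$ by solving the elliptic equation: given any nonnegative $u \in \con0$, the Neumann problem $\Delta v = \overline m - u$ with $\partial_\nu v = 0$ is solvable by Fredholm's alternative because $\int_\Omega (u - \overline m) = 0$ by construction, and normalising by $\intom v = 0$ picks out a unique $v = v[u] \in W^{2,p}(\Omega)$ that depends continuously on $u$ via standard elliptic regularity. Substituting this back yields a scalar parabolic problem for $u$, to which I would apply a Banach fixed point argument in the ball $B \defs \{u \in C^0(\Ombar \times [0,T]) : \|u-u_0\|_\infty \le 1\}$ for small $T > 0$, freezing the low-order coefficients at a previous iterate and invoking standard results on linear parabolic equations (e.g. from Ladyzhenskaya--Solonnikov--Ural'tseva or Amann) to gain the regularity stated in \eqref{eq:main:uv_reg} after a parabolic bootstrap.

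The extensibility criterion $\lim_{t \nearrow \tmax} \|u(\cdot,t)\|_{\leb\infty} = \infty$ follows by iteratively restarting the fixed point argument: if $\|u(\cdot,t)\|_{\leb\infty}$ remained bounded on $[0, \tmax)$, one could apply the local existence result at some $t_0$ close to $\tmax$ and extend beyond $\tmax$, contradicting maximality. The nonlinear damping $-\mu u^\kappa$ with $\kappa \in (1,2]$ poses no additional difficulty here since it is locally Lipschitz on bounded sets in $L^\infty$, which is all that is needed on short time intervals.

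Radial symmetry follows from the uniqueness part: for any rotation $Q \in O(n)$, the pair $(u \circ Q, v \circ Q)$ solves \eqref{prob} with initial datum $u_0 \circ Q = u_0$, and hence coincides with $(u, v)$, so both components depend only on $|x|$. For strict positivity of $u$ on $\Ombar \times [0, \tmax)$, I would view the $u$-equation as the linear parabolic equation
\begin{align*}
u_t = \Delta u - \nabla v \cdot \nabla u + (\lambda - \Delta v - \mu u^{\kappa-1}) u
\end{align*}
with coefficients that are bounded on $\Ombar \times [0, t_0]$ for any $t_0 < \tmax$ thanks to the regularity in \eqref{eq:main:uv_reg}, so that the strong maximum principle together with Hopf's boundary point lemma (to handle the Neumann boundary) yields $u > 0$ on all of $\Ombar \times [0, \tmax)$ from $u_0 > 0$.

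The only mildly delicate point is keeping track of the nonlocal coefficient $\overline m(t)$ in the fixed point setup, but since $\overline m$ depends continuously (indeed, linearly) on $u$ in $\con0$, this is readily absorbed into the contraction estimate. Since the whole construction is by now routine and closely parallels arguments given for instance in \cite{WinklerBlowupHigherdimensionalChemotaxis2011, BlackEtAlRelaxedParameterConditions2020} for the same system \eqref{prob:jl_ls}, I would allow myself to be rather brief and mainly cite these references, spelling out only the symmetry and positivity parts in detail.
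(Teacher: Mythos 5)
Your outline is correct and, in the end, coincides with the paper's proof: the paper disposes of this lemma by a single citation to \cite[Lemma~1.1]{WinklerBlowupHigherdimensionalChemotaxis2011}, which already contains all assertions, including the radial symmetry and positivity that you spell out as well as the normalization of $v$ needed for uniqueness. The standard fixed-point, extensibility, symmetry-by-uniqueness and maximum-principle arguments you sketch are exactly the ones underlying that reference, so there is nothing to object to.
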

\begin{proof}
  This is contained in \cite[Lemma~1.1]{WinklerBlowupHigherdimensionalChemotaxis2011}. %which needs $u_0 \in \con1$.
\end{proof}

Given $u_0$ as in \eqref{eq:main:reg_u0}, we denote the solution given in Lemma~\ref{lm:local_ex} by $(u, v)$ and its maximal existence time by $\tmax$.
Moreover, we always set $\ol m(t) \defs \frac1{|\Omega|} \intom u(\cdot, t)$ for $t \in [0, \tmax)$.

Since the zeroth order term in the first equation in \eqref{prob:ks}, $\lambda u - \mu u^\kappa$, grows at most linearly in $u$,
we can easily control the mass of the first solution component.
\begin{lemma}\label{lm:u_bdd_l1}
  Suppose that $u_0$ satisfies \eqref{eq:main:reg_u0}.
  Then
  \begin{align*}
    \intom u(\cdot, t) \le \ure^{\lambda t} \intom u_0
    \qquad \text{for all $t \in (0, \tmax)$}.
  \end{align*}
\end{lemma}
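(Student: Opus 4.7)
The plan is to integrate the first equation of \eqref{prob} over $\Omega$ and exploit the homogeneous Neumann boundary conditions. The divergence theorem yields
\begin{align*}
  \intom \Delta u \dx = \int_{\partial \Omega} \partial_\nu u \qquad\text{and}\qquad \intom \nabla \cdot (u \nabla v) \dx = \int_{\partial \Omega} u \partial_\nu v,
\end{align*}
both of which vanish by the boundary conditions in \eqref{prob}. Combined with the positivity of $u$ from Lemma~\ref{lm:local_ex}, which guarantees $\mu \intom u^\kappa \ge 0$, this gives
\begin{align*}
  \ddt \intom u(\cdot, t) \dx = \lambda \intom u(\cdot, t) \dx - \mu \intom u^\kappa(\cdot, t) \dx \le \lambda \intom u(\cdot, t) \dx
\end{align*}
for all $t \in (0, \tmax)$.

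Differentiation under the integral sign is justified by the regularity \eqref{eq:main:uv_reg}, and continuity of $t \mapsto \intom u(\cdot, t)$ at $t=0$ follows from $u \in C^0(\Ombar \times [0, \tmax))$. The claim then follows from an elementary Gronwall-type argument: multiplying the above inequality by $\ure^{-\lambda t}$ shows $\ddt (\ure^{-\lambda t} \intom u(\cdot, t)) \le 0$, and integrating from $0$ to $t$ yields the desired bound.

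There is no substantial obstacle here; the estimate is standard and depends only on the mass-conservation structure of the taxis and diffusion terms together with the at-most-linear growth of the zeroth-order contribution $\lambda u - \mu u^\kappa$ in view of $u \ge 0$. The role of the nonlinear degradation term $-\mu u^\kappa$ is merely to be discarded, since any refinement that keeps it would go beyond what is needed in the sequel.
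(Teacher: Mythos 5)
Your proof is correct and follows exactly the route indicated in the paper: integrate the first equation of \eqref{prob}, use the Neumann boundary conditions to eliminate the diffusion and taxis terms, discard $-\mu \intom u^\kappa \le 0$ thanks to $u \ge 0$ and $\mu > 0$, and conclude by a Gronwall-type argument. The paper merely states this in one line, so your write-up is a faithful, slightly more detailed version of the same argument.
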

\begin{proof}
  This immediately follows from integrating the first equation in \eqref{prob} and using that $\mu \gt 0$.
\end{proof}

As used multiple times in the sequel, let us also state the following elementary
\begin{lemma}\label{lm:beta}
  Given $a \gt -1$, there is $B \in (0, \infty)$ such that for any $s_0 \gt 0$, the identity
  \begin{align*}
    \int_0^{s_0} s^a (s_0 - s) \ds = B s_0^{a+2}
  \end{align*}
  holds.
\end{lemma}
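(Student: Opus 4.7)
The plan is to prove this elementary statement by a scaling argument that removes the dependence on $s_0$. Specifically, I would substitute $\sigma = s/s_0$, so that $\ds = s_0 \dsigma$ and the integration limits become $0$ and $1$. This yields
\begin{align*}
  \int_0^{s_0} s^a (s_0 - s) \ds
  = \int_0^1 (s_0\sigma)^a (s_0 - s_0\sigma) s_0 \dsigma
  = s_0^{a+2} \int_0^1 \sigma^a (1-\sigma) \dsigma,
\end{align*}
which exhibits the claimed form with $B \defs \int_0^1 \sigma^a(1-\sigma) \dsigma$. The assumption $a \gt -1$ ensures that this integral converges near $\sigma = 0$ (the only potentially problematic point), hence $B$ is finite, while positivity of the integrand on $(0,1)$ gives $B \gt 0$.

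Alternatively, and slightly more concretely, one may split $s^a(s_0-s) = s_0 s^a - s^{a+1}$ and compute each antiderivative directly. Since $a+1 \gt 0$ and $a+2 \gt 1$, both resulting integrals are well-defined and one obtains the explicit value $B = \frac{1}{(a+1)(a+2)}$. This alternative route makes the positivity and finiteness of $B$ manifest without recourse to the Beta function.

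There is no real obstacle here; the statement is purely a homogeneity observation, and its proof is a one-line computation. The reason to record it as a lemma is presumably that the integral will appear repeatedly in the sequel (in the analysis of $\phi(s_0, t) = \int_0^{s_0} s^{-\gamma}(s_0-s) w(s,t) \ds$), and it is convenient to have a single uniform notation for the constant $B$.
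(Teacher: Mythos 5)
Your proof is correct and uses the same scaling substitution $s \mapsto s_0 s$ as the paper, with $B = \int_0^1 \sigma^a(1-\sigma)\dsigma$ finite and positive precisely because $a \gt -1$. The explicit alternative computation $B = \frac{1}{(a+1)(a+2)}$ is a nice bonus but not a different method in any essential sense.
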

\begin{proof}
  We substitute $s \mapsto s_0 s$ and take $B \defs \int_0^1 s^a (1-s) \ds \in (0, \infty)$.
\end{proof}

\section{The mass accumulation function \tops{$w$}{w}}\label{sec:w}
Given $u_0$ as in \eqref{eq:main:reg_u0} (and thus $(u, v)$ as in Lemma~\ref{lm:local_ex}),
we denote the mass accumulation function by
\begin{align}\label{eq:def_w:def}
  w(s, t) \defs \int_0^{s^\frac1n} \rho^{n-1} u(\rho, t) \drho, \qquad (s, t) \in [0, R^n] \times [0, \tmax),
\end{align}
which has been introduced in the context of chemotaxis systems in \cite{JagerLuckhausExplosionsSolutionsSystem1992}.
In this section, we prove some of its properties, most notably the crucial estimate $w_s \le \frac{w}{s}$ in Lemma~\ref{lm:ws_est}.

\begin{lemma}\label{lm:def_w}
  For every $u_0$ satisfying \eqref{eq:main:reg_u0},
  the function $w$ given by \eqref{eq:def_w:def}
  belongs to $C^0([0, R^n] \times [0, \tmax)) \cap C^{2, 1}([0, R^n] \times (0, \tmax))$ and fulfills
  \begin{align}\label{eq:def_w:w_s=u}
    w_s(s, t) = \frac{u(s^\frac1n, t)}{n}
    \qquad \text{for all $(s, t) \in [0, R^n] \times [0, \tmax)$}
  \end{align}
  as well as
  \begin{align}\label{eq:def_w:w_pde_exact}
          w_t
    &=    n^2 s^{2-\frac2n} w_{ss}
          + n w w_s
          - n \ol m(t) s w_s
          + \lambda w
          - n^{\kappa-1} \mu \int_0^s w_s^{\kappa}(\sigma, t) \dsigma
    \qquad \text{in $(0, R^n) \times (0, \tmax)$}.
  \end{align}
\end{lemma}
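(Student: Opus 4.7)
The plan is a direct computation along the lines of the original Jäger--Luckhaus transformation. The regularity statement and the identity $w_s(s, t) = u(s^{1/n}, t)/n$ both fall out of the definition of $w$ as an indefinite integral: thanks to Lemma~\ref{lm:local_ex}, the integrand $\rho \mapsto \rho^{n-1} u(\rho, t)$ is continuous up to the origin and $C^{2, 1}$ in $(\rho, t) \in (0, R] \times (0, \tmax)$, so the fundamental theorem of calculus combined with the chain rule for $s \mapsto s^{1/n}$ immediately gives both the claimed regularity of $w$ and the formula for $w_s$; the factor $s^{(n-1)/n}$ from $\rho^{n-1}$ cancels against $s^{1/n - 1}$ from the chain rule, leaving exactly $1/n$.

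For the evolution equation I would differentiate under the integral sign and substitute the first equation of \eqref{prob}, yielding
\begin{align*}
  w_t(s, t) = \int_0^{s^{1/n}} \rho^{n-1}\bigl[\Delta u - \nabla \cdot (u \nabla v) + \lambda u - \mu u^\kappa\bigr] \drho.
\end{align*}
The radial identities $\rho^{n-1} \Delta u = (\rho^{n-1} u_\rho)_\rho$ and $\rho^{n-1} \nabla \cdot (u \nabla v) = (\rho^{n-1} u v_\rho)_\rho$ reduce the first two contributions to pure boundary terms at $r \defs s^{1/n}$, the origin values vanishing because $u$ and $v$ are radial $C^1$ functions on $\ol\Omega$; the $\lambda$-term collapses to $\lambda w$ directly by the definition of $w$.

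It then remains to translate these boundary contributions into the variables $s, w, w_s, w_{ss}$. Differentiating $u(s^{1/n}, t) = n w_s(s, t)$ once more in $s$ gives $u_\rho(r, t) = n^2 s^{(n-1)/n} w_{ss}(s, t)$, so $r^{n-1} u_\rho(r, t) = n^2 s^{2 - 2/n} w_{ss}$ delivers the diffusion coefficient. For $v_\rho$, integrating the radial form $(\rho^{n-1} v_\rho)_\rho = \rho^{n-1}(\ol m(t) - u)$ of the second equation in \eqref{prob} from $0$ to $r$ yields $r^{n-1} v_\rho(r, t) = \ol m(t) r^n/n - w(s, t)$, which multiplied by $u(r, t) = n w_s$ generates exactly the $w w_s$- and $\ol m(t) s w_s$-contributions of the claimed identity. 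Finally, substituting $\rho = \sigma^{1/n}$ in $\int_0^r \rho^{n-1} u^\kappa \drho$ together with $u = n w_s$ converts it into $n^{\kappa-1}\int_0^s w_s^\kappa(\sigma, t) \dsigma$, accounting for the last term.

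No serious obstacle arises: the whole argument amounts to careful bookkeeping of powers of $n$ and of $s$. The only points that require a moment of reflection are the justification for differentiating under the integral in $w_t$ and the verification that the boundary contributions at $\rho = 0$ in the integrations by parts vanish, both of which follow from the $C^{2, 1}$-regularity of $u$ (and the corresponding elliptic regularity of $v$) on $\ol\Omega \times (0, \tmax)$ together with radial symmetry, which forces $u_\rho(0, \cdot) = v_\rho(0, \cdot) = 0$.
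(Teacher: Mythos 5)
Your route is genuinely different from the paper's: the paper obtains the regularity from Lemma~\ref{lm:local_ex}, gets \eqref{eq:def_w:w_s=u} from the chain rule and then simply cites \cite[equation~(1.4)]{WinklerBlowupHigherdimensionalChemotaxis2011} for \eqref{eq:def_w:w_pde_exact}, whereas you carry out the full J\"ager--Luckhaus computation, which makes the lemma self-contained. Most of your bookkeeping is correct (vanishing boundary terms at the origin, $r^{n-1}u_\rho = n^2 s^{2-\frac2n}w_{ss}$, and the substitution turning $-\mu\int_0^r\rho^{n-1}u^\kappa\drho$ into $-\mu n^{\kappa-1}\int_0^s w_s^\kappa(\sigma,t)\dsigma$). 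However, your assertion that the $v_\rho$-term ``generates exactly'' the claimed contributions does not hold for the identity as displayed: from $r^{n-1}v_\rho(r,t)=\frac{\ol m(t)s}{n}-w(s,t)$ and $u(r,t)=nw_s(s,t)$ you get
\begin{align*}
  -r^{n-1}u(r,t)\,v_\rho(r,t) \;=\; n\,w\,w_s \;-\; \ol m(t)\, s\, w_s,
\end{align*}
i.e.\ the coefficient of $\ol m(t)\,s\,w_s$ is $1$, while \eqref{eq:def_w:w_pde_exact} displays $n\,\ol m(t)\,s\,w_s$. You cannot have both, and your computation is the correct one: for a spatially homogeneous $u\equiv c$ the chemotactic term vanishes identically, whereas $nww_s-n\ol m(t)sw_s=c^2 s(\frac1n-1)\neq0$ for $n\ge2$; likewise, letting $s\nearrow R^n$ and using the Neumann conditions, the displayed version would force $u(R,t)=0$, contradicting positivity. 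So the stated \eqref{eq:def_w:w_pde_exact} carries a spurious factor $n$ in that one term (harmless for the remainder of the paper, since the corresponding $I_3$ is negative and only ever estimated from below via $sw_s\le w$), but a proof of the lemma must either flag and resolve this or it proves a different identity; claiming exact agreement with the displayed formula is precisely what you may not do.

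A smaller point: the claim that the fundamental theorem of calculus and the chain rule ``immediately give the claimed regularity'' is too quick at $s=0$. They do give $w\in C^0([0,R^n]\times[0,\tmax))$, the identity \eqref{eq:def_w:w_s=u}, and $C^{2,1}$-regularity for $s>0$; but $w_{ss}(s,t)=\frac{1}{n^2}\,s^{\frac1n-1}\,u_r(s^{\frac1n},t)$, so continuity (or even boundedness) of $w_{ss}$ up to $s=0$ requires $u_r(\rho,t)=O(\rho^{n-1})$ as $\rho\searrow0$, which for $n\ge3$ does not follow from $u(\cdot,t)\in C^2(\Ombar)$ alone (for the radial profile $\rho^2$ this quantity behaves like $s^{\frac2n-1}$). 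Since \eqref{eq:def_w:w_pde_exact} is only asserted in $(0,R^n)\times(0,\tmax)$, nothing in your derivation of the PDE is affected, but you should state what your argument actually yields near $s=0$ (namely $C^1$ up to the origin) rather than presenting the full two-derivative regularity there as automatic.
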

\begin{proof}
  This can be seen by a direct calculation.
  In fact,
  the asserted regularity is a consequence of Lemma~\ref{lm:local_ex},
  the identity \eqref{eq:def_w:w_s=u} follows from the chain rule,
  and \cite[equation~(1.4)]{WinklerBlowupHigherdimensionalChemotaxis2011} asserts that \eqref{eq:def_w:w_pde_exact} holds.
\end{proof}

Next, as a major step towards proving $w_s \le \frac{w}{s}$,
we show that for initial data fulfilling \eqref{eq:main:reg_u0}, the first solution component is radially decreasing throughout evolution.
\begin{lemma}\label{lm:ur_le_0}
  Suppose $u_0$ complies with \eqref{eq:main:reg_u0}.
  Then $u_r \le 0$ in $(0, R) \times (0, \tmax)$.
\end{lemma}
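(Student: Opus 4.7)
The plan is to set $z := u_r$, derive a scalar parabolic equation satisfied by $z$ on $(0, R) \times (0, \tmax)$, verify that the parabolic boundary data are nonpositive, and then apply a maximum-principle comparison argument.

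First I would pass to radial coordinates. Using the second equation of \eqref{prob} in the form $\Delta v = \ol m(t) - u$ (equivalently $v_{rr} + \tfrac{n-1}{r}v_r = \ol m(t) - u$) to expand $\nabla \cdot (u \nabla v) = u_r v_r + u(\ol m(t) - u)$, the first equation reads
\begin{align*}
  u_t = u_{rr} + \tfrac{n-1}{r} u_r - u_r v_r + u^2 - \ol m(t) u + \lambda u - \mu u^\kappa.
\end{align*}
Differentiating once in $r$ and substituting for $v_{rr}$ via the same radial identity, a direct computation yields that $z$ solves
\begin{align*}
  z_t = z_{rr} + \left(\tfrac{n-1}{r} - v_r\right) z_r + \beta(r,t)\, z \qquad \text{in } (0, R) \times (0, \tmax),
\end{align*}
with
\begin{align*}
  \beta(r,t) = -\tfrac{n-1}{r^2} + \tfrac{n-1}{r}v_r + 3u - 2\ol m(t) + \lambda - \kappa \mu u^{\kappa-1}.
\end{align*}
The parabolic boundary data are $z(0, t) = 0$ (radial smoothness at the origin), $z(R, t) = 0$ (Neumann condition on $u$), and $z(\cdot, 0) = u_0' \le 0$ by the monotonicity hypothesis in \eqref{eq:main:reg_u0}.

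Next I would invoke the parabolic maximum principle. On any compact time slab $[0, T]$ with $T \lt \tmax$ the coefficient $\beta$ is bounded above: its singular contribution $-\tfrac{n-1}{r^2}$ carries the favorable negative sign, the pointwise terms involving $u$, $\ol m(t)$ and $u^{\kappa-1}$ are controlled because $u$ is smooth on $\Ombar \times [0, T]$, and $\tfrac{1}{r}v_r$ extends continuously to $r = 0$ by the radial smoothness of $v$. Choosing $B \lt \infty$ with $\beta \le B$ on $(0, R] \times [0, T]$ and setting $\tilde z := \ure^{-Bt} z$, we obtain a linear parabolic equation for $\tilde z$ whose zeroth-order coefficient $\beta - B$ is nonpositive. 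The classical maximum principle then applies on each annular cylinder $[\eta, R] \times [0, T]$ with $\eta \in (0, R)$ and bounds $\sup_{[\eta, R] \times [0, T]} \tilde z$ by the supremum of the positive part of $\tilde z$ over the parabolic boundary. This latter supremum vanishes on $\{r = R\}$ and $\{t = 0\}$, and by continuity together with $z(0, \cdot) \equiv 0$ it tends to $0$ as $\eta \searrow 0$; passing to the limit gives $z \le 0$ on $(0, R) \times [0, T]$, and since $T \in (0, \tmax)$ was arbitrary the claim follows.

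The principal technical obstacle is the singularity of the coefficients of the $z$-equation at the coordinate origin. This is resolved by the combination of two observations: the most singular term $-\tfrac{n-1}{r^2}$ in $\beta$ has the favorable sign for a supersolution comparison, and $z$ vanishes on the symmetry axis $\{r = 0\}$, so the maximum principle can safely be applied in the annular region $\{\eta \lt r \lt R\}$ and the limit $\eta \searrow 0$ can be taken at the end without loss.
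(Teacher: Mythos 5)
Your argument is essentially the paper's: you derive the same linear parabolic equation for $z = u_r$ (your substitution of $v_{rr}$ via the radial elliptic identity merely moves the term $\tfrac{n-1}{r}v_r$ into the zeroth-order coefficient, where the paper instead keeps $-v_{rr}$ and bounds it by $u$), you note that this coefficient is bounded above on compact time slabs, and you conclude by a comparison/maximum-principle argument; your exhaustion by annular cylinders with $\eta \searrow 0$ is a correct hands-on implementation of the step the paper delegates to \cite[Proposition~52.4]{QuittnerSoupletSuperlinearParabolicProblems2007}, and your observation that the uniform upper bound on the zeroth-order coefficient survives near $r=0$ because $-\tfrac{n-1}{r^2}$ has the good sign and $\tfrac{n-1}{r}v_r$ stays bounded is exactly the right point.

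The one step you pass over silently is the regularity of $z = u_r$ itself, which the paper is careful about. The solution class \eqref{eq:main:uv_reg} gives $u \in C^{2,1}$ only for $t>0$ and merely $C^0$ up to $t=0$; it yields neither the derivatives $u_{rrr}$, $u_{rt}$ needed to read your equation for $z$ classically, nor continuity of $u_r$ on $[0,R]\times[0,T]$, which your comparison uses both when you take $z(\cdot,0) = u_0' \le 0$ as parabolic boundary datum and when you send $\eta \searrow 0$ using uniform continuity near $\{r=0\}$. Continuity of $u_r$ down to $t=0$ can in fact fail at the corner $(R,0)$, since \eqref{eq:main:reg_u0} does not impose the compatibility condition $u_0'(R)=0$ while $u_r(R,t)=0$ for all $t>0$. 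This is precisely why the paper's proof begins with an approximation argument in the spirit of \cite[Lemma~2.2]{WinklerCriticalBlowupExponent2018}, after which one may assume $u_r \in C^0([0,R]\times[0,\tmax)) \cap C^{2,1}((0,R)\times(0,\tmax))$ and then run exactly the computation you performed. With that regularization (or an equivalent approximation of $u_0$ combined with continuous dependence) inserted, your proof is complete.
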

\begin{proof}
  This can be shown as in \cite[Lemma~5.1]{BlackEtAlRelaxedParameterConditions2020} or \cite[Lemma~3.7]{FuestFinitetimeBlowupTwodimensional2020}
  (which in turn both follow \cite[Lemma~2.2]{WinklerCriticalBlowupExponent2018}).
  However, due to the importance of this lemma for showing the crucial estimate $w_s \le \frac{w}{s}$ in the succeeding lemma,
  we choose to at least sketch the proof here.
  First, by an approximation argument as in \cite[Lemma~2.2]{WinklerCriticalBlowupExponent2018},
  we may assume $u_r \in C^0([0, R] \times [0, \tmax)) \cap C^{2, 1}((0, R) \times (0, \tmax))$.
  
  Furthermore, the second equation in \eqref{prob} asserts
  \begin{align*}
      r^{1-n} (r^{n-1} u v_r)_r
    = u_r v_r + u r^{1-n} (r^{n-1} v_r)_r
    = u_r v_r - u^2 + \ol m(t) u
    \qquad \text{in $(0, R) \times (0, \tmax)$}
  \end{align*}
  and hence
  \begin{align*}
          u_{rt}
    &=    \left( r^{1-n} \left( r^{n-1} (u_r - u v_r) \right)_r + f(u) \right)_r \\
    &=    u_{rrr} + \frac{n-1}{r} u_{rr} - \frac{n-1}{r^2} u_r - u_{rr} v_r - u_r v_{rr} + 2 u u_r - \ol m(t) u_r + f'(u) u_r \\
    &=    u_{rrr} + a(r, t) u_{rr} + b(r, t) u_r
    \qquad \text{in $(0, R) \times (0, \tmax)$},
  \end{align*}
  where
  \begin{align*}
    a(r, t) \defs \frac{n-1}{r} - v_r(r, t)
    \quad \text{and} \quad
    b(r, t) \defs - \frac{n-1}{r^2} - v_{rr}(r, t) + 2 u(r, t) - \ol m(t) + f'(u(r, t))
  \end{align*}
  for $(r, t) \in (0, R) \times (0, \tmax)$.

  As can be rapidly seen by writing the second equation in \eqref{prob} in radial coordinates
  (and has been argued in more detail in \cite[Lemma~3.6]{FuestFinitetimeBlowupTwodimensional2020}, for instance),
  $-v_{rr} \le u$ holds throughout $(0, R) \times (0, \tmax)$, so that for fixed $T \in (0, \tmax)$, we can estimate
  \begin{align*}
        \sup_{r \in (0, R), t \in (0, T)} b(r, t)
    \le 3 \|u\|_{L^\infty((0, R) \times (0, T))} + \|f'\|_{L^\infty(0, \|u\|_{L^\infty((0, R) \times (0, T))})}
    \lt \infty.
  \end{align*}
  An application of the maximum principle (cf.\ \cite[Proposition~52.4]{QuittnerSoupletSuperlinearParabolicProblems2007})
  then gives $u_r \le 0$ in $(0, R) \times (0, T)$, which upon taking $T \nea \tmax$ implies the statement.
\end{proof}

As already advertised multiple times, this lemma now allows us to rapidly obtain the important estimate $w_s \le \frac{w}{s}$.
\begin{lemma}\label{lm:ws_est}
  Assume that $u_0$ satisfies \eqref{eq:main:reg_u0}.
  For all $s \in [0, R^n]$ and $t \in [0, \tmax)$,
  \begin{align}\label{eq:ws_est:main}
    w_s(s, t) \le \frac{w(s, t)}{s} \le w_s(0, t)
  \end{align}
  holds.
  In particular, for all $t_0 \in (0, \tmax)$ there is $C \gt 0$ such that
  \begin{align}\label{eq:ws_est:w_sim_s}
    \frac sC \le w(s, t) \le Cs \qquad \text{for $s \in [0, R^n]$ and $t \in [0, t_0]$}.
  \end{align}
\end{lemma}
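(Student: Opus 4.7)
The plan is to obtain \eqref{eq:ws_est:main} as an almost immediate consequence of the monotonicity of $w_s$ in $s$, which in turn will be deduced from Lemma~\ref{lm:ur_le_0}. The identity $w_s(s,t) = \frac{u(s^{1/n},t)}{n}$ provided by Lemma~\ref{lm:def_w} turns the radial monotonicity $u_r \le 0$ into a monotonicity statement for $w_s(\cdot, t)$: since $s \mapsto s^{1/n}$ is nondecreasing on $[0, R^n]$ and, for every $t \in [0, \tmax)$, the map $r \mapsto u(r, t)$ is nonincreasing on $[0, R]$, the composition $s \mapsto w_s(s, t)$ is nonincreasing on $[0, R^n]$.

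With this monotonicity in hand, the desired inequality is just an averaging estimate. Because $w(0, t) = 0$ by definition, we can write
\begin{align*}
  \frac{w(s, t)}{s} = \frac{1}{s} \int_0^s w_s(\sigma, t) \dsigma
\end{align*}
for $s \in (0, R^n]$; since the integrand is a nonincreasing function of $\sigma$, the average lies between $w_s(s, t)$ and $w_s(0, t)$, which gives \eqref{eq:ws_est:main} on $(0, R^n]$. The case $s = 0$ follows by continuity of $w_s(\cdot, t)$ together with $\lim_{s \searrow 0} w(s,t)/s = w_s(0, t)$.

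For \eqref{eq:ws_est:w_sim_s}, fix $t_0 \in (0, \tmax)$. By Lemma~\ref{lm:local_ex}, $u$ is continuous and strictly positive on the compact set $\Ombar \times [0, t_0]$, so there exist constants $0 \lt c_1 \le c_2 \lt \infty$ with $c_1 \le u \le c_2$ on $\Ombar \times [0, t_0]$. In view of \eqref{eq:def_w:w_s=u}, this gives $\frac{c_1}{n} \le w_s(\cdot, t) \le \frac{c_2}{n}$ on $[0, R^n]$ for every $t \in [0, t_0]$, and integrating from $0$ to $s$ yields \eqref{eq:ws_est:w_sim_s} with $C \defs \max\{\frac{n}{c_1}, \frac{c_2}{n}\}$.

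I do not anticipate a real obstacle here: the only nontrivial input is Lemma~\ref{lm:ur_le_0}, which has already been established, and the rest is the elementary observation that the mean of a nonincreasing function on $[0, s]$ is squeezed between its endpoint values. The sharpness relative to the naive bound $w_s \le C/s$ is what matters later, but from a proof-structure standpoint this lemma is essentially a one-line corollary of radial monotonicity.
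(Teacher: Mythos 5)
Your argument is correct and essentially the same as the paper's: the key inequality follows from the monotonicity of $w_s(\cdot,t)$ (via Lemma~\ref{lm:ur_le_0} and \eqref{eq:def_w:w_s=u}), with your integral-average formulation of $\frac{w(s,t)}{s}$ replacing the paper's equivalent use of the mean value theorem. Your derivation of \eqref{eq:ws_est:w_sim_s} from the positivity and boundedness of $u$ on $\Ombar\times[0,t_0]$ likewise matches the paper's justification.
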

\begin{proof}
  For fixed $t \in [0, \tmax)$ and $s \in [0, R^n]$, the mean value theorem provides us with $\xi \in (0, s)$ such that
  $w(s, t) = s w_s(\xi, t)$,
  which already proves \eqref{eq:ws_est:main} since $w_s$ is decreasing by Lemma~\ref{lm:ur_le_0} and \eqref{eq:def_w:w_s=u}.
  Moreover, a consequence thereof is \eqref{eq:ws_est:w_sim_s},
  since $w_s$ is positive and bounded in $[0, R^n] \times [0, t_0]$ for any $t_0 \in (0, \tmax)$
  by Lemma~\ref{lm:local_ex} and \eqref{eq:def_w:w_s=u}.
\end{proof}

\section{A supersolution to a superlinear ODE: finite-time blow-up}\label{sec:phi}
We will construct initial data leading to finite-time blow-up and hence prove Theorem~\ref{th:main} in this section.
As already mentioned in the introduction, our argument is based on constructing a function $\phi$
which cannot exist globally, implying that the solution of \eqref{prob} also can only exist on a finite time interval.
In fact, we define $\phi$ as in \cite{BlackEtAlRelaxedParameterConditions2020} or \cite{WinklerFinitetimeBlowupLowdimensional2018};
that is, for given $u_0$ as in \eqref{eq:main:reg_u0} and $\gamma \in (0, 2)$, we set
\begin{align}\label{eq:phi:def_phi}
  \phi(s_0, t) \defs \int_0^{s_0} s^{-\gamma} (s_0 - s) w(s, t) \ds, \qquad s_0 \in (0, R^n), t \in (0, \tmax).
\end{align}
However, as the parameter $\gamma$ herein may be larger than $1$
(unlike as in \cite{BlackEtAlRelaxedParameterConditions2020} or \cite{WinklerFinitetimeBlowupLowdimensional2018}),
some care is needed for calculating the time derivative of $\phi$.
This is done in the following

\begin{lemma}\label{lm:phi}
  Suppose that $u_0$ complies with \eqref{eq:main:reg_u0}.
  Let $\gamma \in (0, 2)$ and $\phi$ be as in \eqref{eq:phi:def_phi}.
  For every $s_0 \in (0, R^n)$,
  $\phi(s_0, \cdot)$ belongs to $C^0([0, \tmax)) \cap C^1((0, \tmax))$ and fulfills
  \begin{align}\label{eq:phi:phi_ode}
          \phi_t(s_0, t)
    &\ge  n^2 \int_0^{s_0} s^{2-\frac2n - \gamma} (s_0 - s) w_{ss} \ds \notag \\
    &\pe  + n \int_0^{s_0} s^{-\gamma} (s_0 - s) w w_s \ds \notag \\
    &\pe  - n \ol m(t) \int_0^{s_0} s^{1-\gamma} (s_0 - s) w_s \ds \notag \\
    &\pe  - n^{\kappa-1} \mu \int_0^{s_0} s^{-\gamma} (s_0 - s) \int_0^s w_s^\kappa(\sigma, t) \dsigma \ds \notag \\
    &\sfed I_1(s_0, t) + I_2(s_0, t) + I_3(s_0, t) + I_4(s_0, t)
    \qquad \text{for all $t \in (0, \tmax)$}.
  \end{align}
\end{lemma}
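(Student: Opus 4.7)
The plan is to verify well-definedness, continuity, and differentiability of $\phi(s_0, \cdot)$, then to differentiate under the integral sign and substitute the PDE \eqref{eq:def_w:w_pde_exact} for $w_t$, and finally to discard a nonnegative contribution.

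First, the pointwise bound $w(s,t) \le w_s(0,t) \cdot s$ from Lemma~\ref{lm:ws_est}, combined with the local $L^\infty$-bound for $w_s(0, \cdot)$ inherited from Lemma~\ref{lm:local_ex} via \eqref{eq:def_w:w_s=u}, shows that the integrand in \eqref{eq:phi:def_phi} is dominated by $\frac{1}{n} \|u(\cdot, t)\|_{\leb\infty} \cdot s_0 \cdot s^{1-\gamma}$; this is integrable on $(0, s_0)$ precisely because $\gamma \lt 2$. Continuity of $\phi(s_0, \cdot)$ on $[0, \tmax)$ then follows by dominated convergence, the above majorant being locally uniform in $t$.

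For the derivative, I fix a compact subinterval $[t_1, t_2] \subset (0, \tmax)$ and use Lemma~\ref{lm:local_ex} together with \eqref{eq:def_w:w_s=u} and the identity $w_{ss}(s,t) = \frac{1}{n^2} s^{\frac1n - 1} u_r(s^\frac1n, t)$ to bound each of the five contributions to $w_t$ in \eqref{eq:def_w:w_pde_exact}. The three lower-order terms $n w w_s$, $n \ol m(t) s w_s$ and $\lambda w$ are dominated by $Cs$ on $[0, R^n] \times [t_1, t_2]$ thanks to \eqref{eq:ws_est:w_sim_s}; the degradation term satisfies $\int_0^s w_s^\kappa(\sigma, t) \dsigma \le Cs$ since $w_s$ is uniformly bounded there; and the diffusion coefficient obeys $s^{2-\frac2n}|w_{ss}(s,t)| = \frac{1}{n^2} s^{1-\frac1n} |u_r(s^\frac1n, t)| \le C s^{1-\frac1n}$. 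Each such bound, multiplied by $s^{-\gamma}(s_0 - s)$, yields an integrable majorant on $[0, s_0]$ uniform in $t \in [t_1, t_2]$, so standard differentiation under the integral sign delivers
\begin{align*}
  \phi_t(s_0, t) = I_1(s_0, t) + I_2(s_0, t) + I_3(s_0, t) + I_4(s_0, t) + \lambda \phi(s_0, t).
\end{align*}
Since $\lambda \ge 0$ and $w \ge 0$ (hence $\phi(s_0, t) \ge 0$), dropping the last summand yields \eqref{eq:phi:phi_ode}.

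The main obstacle is the diffusion term $I_1$ when $\gamma$ approaches $2$: the bound above furnishes absolute integrability only for $\gamma \lt 2 - \frac1n$. For $\gamma \in [2 - \frac1n, 2)$ I would exploit that Lemma~\ref{lm:ur_le_0} combined with $w_{ss} = \frac{1}{n^2} s^{\frac1n - 1} u_r(s^\frac1n, \cdot)$ forces $w_{ss} \le 0$ throughout $(0, R^n) \times (0, \tmax)$, so the integrand of $I_1$ has a constant sign; $I_1(s_0, t)$ is thus well-defined in $[-\infty, 0]$, and truncating the spatial integral at $s = \eps$ and passing to the limit $\eps \searrow 0$ by monotone convergence legitimises its appearance in the identity for $\phi_t$.
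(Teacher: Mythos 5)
Your overall strategy is the same as the paper's: establish continuity from $w(s,t)\le Cs$ and $1-\gamma\gt -1$, differentiate under the integral sign on compact time intervals using locally uniform integrable majorants for each term of \eqref{eq:def_w:w_pde_exact}, identify the $\lambda$-term as $\lambda\phi(s_0,t)\ge 0$ and drop it. The treatment of $I_2,I_3,I_4$ and of continuity is fine. The gap lies in your treatment of the diffusion term for $\gamma\in[2-\frac1n,2)$. Your majorant uses only $|u_r|\le C$, which indeed fails to be integrable there, and the fallback you sketch does not close the lemma: monotone convergence of the truncated integrals $I_1^\eps$ towards $I_1\in[-\infty,0]$ says nothing about differentiability of $\phi(s_0,\cdot)$ in $t$, which is part of the assertion (and is what justifies writing $\phi_t$ at all); to pass from $\partial_t\phi^\eps$ to $\partial_t\phi$ you would need locally uniform (in $t$) control of the $\eps$-limit, i.e.\ precisely the integrable majorant you are trying to avoid. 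Moreover, if $I_1$ could really be $-\infty$, the identity $\phi_t=I_1+\dots+\lambda\phi$ you invoke could not hold with $\phi\in C^1$, so the sign argument alone cannot ``legitimise'' it.

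The repair is short, and there are two routes. The paper's route: Lemma~\ref{lm:def_w} asserts $w\in C^{2,1}([0,R^n]\times(0,\tmax))$, so on $[0,s_0]\times[t_0,t_1]$ one has $|w_{ss}|\le c_3$ and hence $n^2 s^{2-\frac2n-\gamma}(s_0-s)|w_{ss}|\le n^2 c_3\, s_0^{1-\frac2n} s^{1-\gamma}(s_0-s)$, an integrable majorant for every $\gamma\lt 2$; no case distinction is needed. Alternatively, within your own computation, use that $u$ is radially symmetric and $C^{2,1}$ up to the boundary on compact subintervals of $(0,\tmax)$, so $\nabla u(0,t)=0$ and therefore $|u_r(r,t)|\le Cr$ there; then $s^{2-\frac2n}|w_{ss}(s,t)|=\frac1{n^2}s^{1-\frac1n}|u_r(s^{\frac1n},t)|\le \frac{C}{n^2}\,s$, which again yields the majorant $Cs^{1-\gamma}(s_0-s)$ for all $\gamma\in(0,2)$ and makes your second paragraph cover the full range claimed in the lemma. (Note that the restriction you worried about is harmless for the paper's application, since Lemma~\ref{lm:tmax_le_12} only uses $\gamma\lt 2-\frac4n$, but the statement to be proved allows all $\gamma\in(0,2)$.)
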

\begin{proof}
  We first fix $s_0 \in (0, R^n)$
  and note that $\phi(s_0, \cdot) \in C^0([0, \tmax))$ because of \eqref{eq:ws_est:w_sim_s} and $1 - \gamma \gt -1$.
  Letting $0 \lt t_0 \lt t_1 \lt \tmax$,
  we then make use of Lemma~\ref{lm:def_w} and Lemma~\ref{lm:ws_est} to obtain $c_1, c_2, c_3, c_4\gt 0$ such that
  \begin{align*}
    w(s, t) \le c_1 s, \quad
    w_s(s, t) \le c_2, \quad
    |w_{ss}(s, t)| \le c_3
    \quad \text{and} \quad
    \ol m(t) \le c_4
    \qquad \text{for $(s, t)\in [0, s_0] \times [t_0, t_1]$}.
  \end{align*}
  Recalling \eqref{eq:def_w:w_pde_exact}, we obtain
  \begin{align*}
    &\pe  \ddt \left( s^{-\gamma} (s_0 - s) w\right) \\
    &=    \left(
            n^2 s^{2-\frac2n} w_{ss}
            + n w w_s
            - n \ol m(t) s w_s
            + \lambda w
            - n^{\kappa-1} \mu \int_0^s w_s^\kappa(\sigma, t) \dsigma
          \right) s^{-\gamma} (s_0 - s)
  \end{align*}
  for $s \in (0, s_0)$ and $t \in (0, \tmax)$,
  so that
  \begin{align*}
          \left| \ddt \left( s^{-\gamma} (s_0 - s) w(s, t) \right) \right|
    &\le  \left(
            n^2 c_3 s_0^{1 - \frac2n}
            + n c_1 c_2 
            + n c_2 c_4
            + \lambda c_1 
            + n^{\kappa-1} \mu c_2^\kappa
          \right) s^{1 - \gamma} (s_0 - s)
  \end{align*}
  for all $s \in (0, s_0)$ and $t \in (t_0, t_1)$.
  Again due to $1 - \gamma \gt -1$, we therefore have $\phi(s_0, \cdot) \in C^1((0, \tmax))$ 
  and
  \begin{align*}
          \phi_t(s_0, t) 
    &=    I_1(s_0, t) + I_2(s_0, t) + I_3(s_0, t)
          + \lambda \int_0^{s_0} s^{-\gamma} (s_0 - s) w \ds
          + I_4(s_0, t)
  \end{align*}
  for all $t \in (0, \tmax)$,
  which due to $\lambda \ge 0$ implies \eqref{eq:phi:phi_ode}.
\end{proof}

Aiming to derive that $\phi$ is a supersolution to a superlinear ODE,
we now estimate the terms $I_1, \dots, I_4$ in \eqref{eq:phi:phi_ode}
and begin with $I_4$, the term stemming from the logistic source.
In the following proof, we will crucially make use of the estimate \eqref{eq:ws_est:main}
to improve on corresponding results obtained by the predecessors~%
\cite{BlackEtAlRelaxedParameterConditions2020} and~\cite{WinklerFinitetimeBlowupLowdimensional2018}.

\begin{lemma}\label{lm:i4}
  Let $I_2$ and $I_4$ be as in \eqref{eq:phi:phi_ode}.

  \begin{enumerate}
    \item[(i)]
      If $\kappa = 2$, $\gamma \gt 1$ and $u_0$ fulfills \eqref{eq:main:reg_u0},
      then
      \begin{align}\label{eq:i4:statement_1}
        I_4(s_0, t) \ge - \frac{\mu}{\gamma - 1} I_2(s_0, t)
        \qquad \text{for all $s_0 \in (0, R^n)$ and $t \in (0, \tmax)$}.
      \end{align}

    \item[(ii)]
      Let $\kappa \in (1, 2)$ and $\gamma \in (\frac{2(\kappa-1)}{\kappa}, 1)$.
      There exists $C_4 \gt 0$ such that whenever $u_0$ fulfills \eqref{eq:main:reg_u0}, then
      \begin{align}\label{eq:i4:statement_2}
        I_4(s_0, t) \ge C_4 s_0^{\frac{2-\kappa}{2}} I_2^\frac{\kappa}{2}(s_0, t)
        \qquad \text{for all $s_0 \in (0, \min\{1, R^n\})$ and $t \in (0, \tmax)$}.
      \end{align}
  \end{enumerate}
\end{lemma}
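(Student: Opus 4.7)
Both parts hinge on first switching the order of integration in the expression for $I_4$ via Fubini,
\[
  -\frac{I_4(s_0, t)}{n^{\kappa-1}\mu} = \int_0^{s_0} w_s^\kappa(\sigma, t)\, F(\sigma) \dsigma, \qquad F(\sigma) \defs \int_\sigma^{s_0} s^{-\gamma}(s_0 - s) \ds,
\]
and then on using Lemma~\ref{lm:ws_est} to replace some factors of $w_s$ by $w/\sigma$, thereby producing the $w w_s$ combination that drives $I_2$.

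For part~(i) I would bound $F$ by pulling out $s_0 - s \le s_0 - \sigma$ on $(\sigma, s_0)$ and computing $\int_\sigma^{s_0} s^{-\gamma}\ds$ explicitly; since $\gamma > 1$, this yields $F(\sigma) \le (s_0 - \sigma)\sigma^{1-\gamma}/(\gamma - 1)$. Combining with $w_s^2 \le w_s \cdot w/\sigma$ collapses the integrand of $-I_4/(n\mu)$ to $\sigma^{-\gamma}(s_0 - \sigma) w w_s/(\gamma - 1)$, whose integral equals $I_2/(n(\gamma - 1))$, giving \eqref{eq:i4:statement_1}.

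For part~(ii) the analogous pointwise estimate reads $w_s^\kappa \le \sigma^{1-\kappa} w_s w^{\kappa-1}$, and since now $\gamma < 1$, the same manipulation of $F$ produces $F(\sigma) \le (s_0 - \sigma) s_0^{1-\gamma}/(1-\gamma)$. This reduces the task to bounding
\[
  \int_0^{s_0}\sigma^{1-\kappa}(s_0 - \sigma) w_s(\sigma, t) w^{\kappa - 1}(\sigma, t)\dsigma
\]
by a power of $I_2$. The key move is an application of Hölder's inequality with conjugate exponents $p = 2/\kappa$ and $p' = 2/(2-\kappa)$, choosing as the first factor $f = [\sigma^{-\gamma}(s_0 - \sigma) w w_s]^{\kappa/2}$, so that $\int_0^{s_0} f^p \dsigma = I_2/n$ exactly. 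After one more application of $w_s \le w/\sigma$, the conjugate integrand is dominated by $\sigma^{\kappa(\gamma-1)/(2-\kappa)}(s_0 - \sigma)$, whose integrability on $(0, s_0)$ is precisely equivalent to the hypothesis $\gamma > 2(\kappa - 1)/\kappa$. Lemma~\ref{lm:beta} then evaluates this integral, and tracking all powers of $s_0$ yields a net factor $s_0^{(2-\kappa)(3-\gamma)/2}$ in front of $I_2^{\kappa/2}$; because $s_0 < 1$ and $3 - \gamma > 1$, this is dominated by $s_0^{(2-\kappa)/2}$, giving \eqref{eq:i4:statement_2}.

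The main obstacle is calibrating the Hölder split in part~(ii): the first factor must be chosen so that it reproduces the $I_2$-integrand upon being raised to the $p$-th power, and only once this choice has been made does the exponent condition $\gamma > 2(\kappa-1)/\kappa$ emerge as the integrability threshold at $\sigma = 0$ for the conjugate factor. The remainder is careful bookkeeping of $\sigma$- and $s_0$-powers, together with the comparison $s_0^{(2-\kappa)(3-\gamma)/2} \le s_0^{(2-\kappa)/2}$ made possible by the restriction $s_0 < 1$.
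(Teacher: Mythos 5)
Your argument is correct and follows essentially the same route as the paper's proof: Fubini together with $s_0-s\le s_0-\sigma$ and the explicit integration of $s^{-\gamma}$, the key estimate $w_s\le w/s$ from Lemma~\ref{lm:ws_est}, and, in part~(ii), H\"older with exponents $\tfrac2\kappa,\tfrac2{2-\kappa}$ calibrated so that the $I_2$-integrand appears, followed by Lemma~\ref{lm:beta} (with the same threshold $a>-1\iff\gamma>\tfrac{2(\kappa-1)}{\kappa}$) and the restriction $s_0<1$; the only cosmetic deviations are that you track the factor $s_0^{1-\gamma}$ instead of bounding it by $R^{n(1-\gamma)}$ and insert $w_s\le w/s$ in two stages rather than one, which yields the same final bound. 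Note that, exactly as in the paper's own proof, what you (correctly) establish and what is used later is the lower bound $I_4\ge -C_4\, s_0^{\frac{2-\kappa}{2}} I_2^{\frac\kappa2}$, i.e.\ \eqref{eq:i4:statement_2} as printed is missing a minus sign.
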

\begin{proof}
  We let $\gamma \in (0, \infty) \setminus \{1\}$
  and also fix $u_0$ as in \eqref{eq:main:reg_u0} but will make sure that $C_4$ can be taken independently of $u_0$.
  By Fubini's theorem, we first observe that
  \begin{align}\label{eq:i4:first_est}
          - I_4(s_0, t)
    &=    - n^{\kappa-1} \mu \int_0^{s_0} s^{-\gamma} (s_0 - s) \int_0^s w_s^\kappa(\sigma, t) \dsigma \ds \notag \\
    &=    - n^{\kappa-1} \mu \int_0^{s_0} \left( \int_\sigma^{s_0} s^{-\gamma} (s_0 - s) \ds \right) w_s^{\kappa}(\sigma, t) \dsigma \notag \\
    &\ge  - n^{\kappa-1} \mu \int_0^{s_0} \left( \int_\sigma^{s_0} s^{-\gamma} \ds \right) (s_0 - \sigma)  w_s^{\kappa}(\sigma, t) \dsigma \notag \\
    &=    - \frac{n^{\kappa-1} \mu}{1-\gamma} \int_0^{s_0} \left( s_0^{1-\gamma} - s^{1-\gamma} \right) (s_0 - s)  w_s^{\kappa}(s, t) \ds
  \end{align}
  for all $s_0 \in (0, R^n)$ and $t \in (0, \tmax)$.

  In the case of $\gamma \gt 1$ and $\kappa = 2$, we drop a positive term and employ \eqref{eq:ws_est:main} in calculating
  \begin{align*}
          - I_4(s_0, t)
    &\ge  - \frac{n \mu}{\gamma-1} \int_0^{s_0} s^{1-\gamma} (s_0 - s) w_s^2 \ds \\
    &\ge  - \frac{n \mu}{\gamma-1} \int_0^{s_0} s^{-\gamma} (s_0 - s) w w_s \ds \\
    &=    - \frac{\mu}{\gamma-1} I_2(s_0, t) 
    \qquad \text{for all $s_0 \in (0, R^n)$ and $t \in (0, \tmax)$},
  \end{align*}
  which already implies \eqref{eq:i4:statement_1}.
  
  If on the other hand $\gamma \in (0, 1)$ and $\kappa \in (0, 1)$,
  going back to \eqref{eq:i4:first_est} and making use of use of \eqref{eq:ws_est:main},
  we see that
  \begin{align}\label{i4:part2_1}
          I_4(s_0, t)
    &\ge  - \frac{n^{\kappa-1} \mu }{1-\gamma} s_0^{1-\gamma} \int_0^{s_0} (s_0 - s) w_s^{\kappa}(s, t) \ds
     \ge  - \frac{n^{\kappa-1} \mu }{1-\gamma} R^{n(1-\gamma)} \int_0^{s_0} s^{-\frac{\kappa}{2}} (s_0 - s) (w w_s)^\frac{\kappa}{2} \ds
  \end{align}
  for all $s_0 \in (0, R^n)$ and $t \in (0, \tmax)$.
  By Hölder's inequality (with exponents $\frac{2}{2-\kappa}, \frac2\kappa$), we have therein
  \begin{align}\label{i4:part2_2}
          \int_0^{s_0} s^{-\frac{\kappa}{2}} (s_0 - s) (w w_s)^\frac{\kappa}{2} \ds
    &=    \int_0^{s_0} s^{-\frac{(1-\gamma) \kappa}{2}} (s_0 - s) (s^{-\gamma} w w_s)^\frac{\kappa}{2} \ds \notag \\
    &\le  \left( \int_0^{s_0} s^{-\frac{(1-\gamma)\kappa}{2-\kappa}} (s_0 - s) \ds \right)^\frac{2-\kappa}{2}
          \left( \intom s^{-\gamma} (s_0 - s) w w_s \ds \right)^\frac{\kappa}{2}
  \end{align}
  for all $s_0 \in (0, R^n)$ and $t \in (0, \tmax)$.
  We assume now moreover that $\gamma \gt \frac{2(\kappa-1)}{\kappa}$
  and hence $\gamma-1 \gt \frac{\kappa-2}{\kappa}$ as well as $a \defs \frac{(\gamma-1)\kappa}{2-\kappa} \gt -1$,
  so that applying Lemma~\ref{lm:beta} (with $B$ as in that lemma) gives
  \begin{align}\label{i4:part2_3}
          \int_0^{s_0} s^{-\frac{(1-\gamma)\kappa}{2-\kappa}} (s_0 - s) \ds
    =     B s_0^{a+2}
    \le   B s_0
    \qquad \text{for all $s_0 \in (0, \min\{1, R^n\})$}.
  \end{align}
  Finally, combining \eqref{i4:part2_1}--\eqref{i4:part2_3} and the definition of $I_2$ yields \eqref{eq:i4:statement_2}
  for some $C_4 \gt 0$ independent of $u_0$.
\end{proof}
 
The remaining integrals in \eqref{eq:phi:phi_ode} can be estimated
as in \cite{WinklerFinitetimeBlowupLowdimensional2018} or \cite{BlackEtAlRelaxedParameterConditions2020}.
However, at least for the statement concerning $I_1$,
we would like to give a full proof here in order to show the basis of the restriction on $\kappa$ in Theorem~\ref{th:main}.
Indeed, while in Lemma~\ref{lm:i4} above, $\gamma$ has to be taken sufficiently large,
for estimating $I_1$, we need $\gamma$ to be suitably small.
We will obtain finite-time blow-up precisely in the cases where the set of admissible $\gamma$ for both these lemmata is nonempty.
Moreover, compared to \cite{WinklerFinitetimeBlowupLowdimensional2018},
the proof below makes use of the estimate \eqref{eq:ws_est:main} and is hence somewhat shorter.
\begin{lemma}\label{lm:i1}
  Let $\gamma \in (0, 2 - \frac4n)$.
  There is $C_1 \gt 0$ such that
  whenever $u_0$ satisfies \eqref{eq:main:reg_u0} and $I_1, I_2$ are as in \eqref{eq:phi:phi_ode},
  then
  \begin{align*}
        I_1(s_0, t)
    \ge - C_1 s_0^{\frac{3-\gamma}{2} - \frac2n} I_2^\frac12(s_0, t)
    \qquad \text{for all $s_0 \in (0, R^n)$ and $t \in (0, \tmax)$}.
  \end{align*}
\end{lemma}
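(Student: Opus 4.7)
The plan is to move the second derivative off $w_{ss}$ via integration by parts, discard the resulting nonnegative term, and then control the remaining piece by $I_2^{1/2}$ using the key estimate $w_s \le w/s$ from Lemma~\ref{lm:ws_est} together with the Cauchy--Schwarz inequality. The hypothesis $\gamma \lt 2 - \frac{4}{n}$ will enter twice: once to make the boundary term vanish, and once to ensure integrability of a power of $s$ in the final Cauchy--Schwarz step.

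Concretely, I would write $I_1(s_0, t) = n^2 \int_0^{s_0} s^{2-\frac{2}{n}-\gamma}(s_0-s) w_{ss}\ds$ and integrate by parts with $u = s^{2-\frac{2}{n}-\gamma}(s_0-s)$ and $dv = w_{ss}\ds$. The boundary term at $s_0$ vanishes because of the factor $(s_0 - s)$, while the boundary term at $0$ vanishes because $w_s$ is bounded near the origin (by Lemma~\ref{lm:ws_est}) and $2 - \frac{2}{n} - \gamma \gt 0$ under our assumption $\gamma \lt 2 - \frac{4}{n}$. Computing $u'$ yields
\begin{align*}
  I_1(s_0,t) = -n^2\Big(2-\tfrac{2}{n}-\gamma\Big)\!\int_0^{s_0}\! s^{1-\frac{2}{n}-\gamma}(s_0-s)\,w_s\ds + n^2\!\int_0^{s_0}\! s^{2-\frac{2}{n}-\gamma}\,w_s\ds,
\end{align*}
and since $w_s \ge 0$ and $2-\frac{2}{n}-\gamma \gt 0$, the second integral is nonnegative and may be discarded in the lower bound on $I_1$.

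It then remains to bound the first integral from above by a multiple of $s_0^{\frac{3-\gamma}{2}-\frac{2}{n}} I_2^{1/2}$. Here is where Lemma~\ref{lm:ws_est} plays its essential role: writing $w_s = w_s^{1/2}\cdot w_s^{1/2} \le w_s^{1/2}(w/s)^{1/2}$, the integrand becomes $s^{\frac{1}{2}-\frac{2}{n}-\gamma}(s_0-s)(w w_s)^{1/2}$. Splitting this as $\bigl(s^{1-\frac{4}{n}-\gamma}(s_0-s)\bigr)^{1/2} \cdot \bigl(s^{-\gamma}(s_0-s)\, w w_s\bigr)^{1/2}$ and applying the Cauchy--Schwarz inequality separates out a factor of $I_2^{1/2}$ (divided by $\sqrt n$), while the other factor is $\bigl(\int_0^{s_0} s^{1-\frac{4}{n}-\gamma}(s_0-s)\ds\bigr)^{1/2}$. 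The integrability condition for the latter is exactly $1 - \frac{4}{n} - \gamma \gt -1$, i.e.\ $\gamma \lt 2-\frac{4}{n}$, so Lemma~\ref{lm:beta} evaluates this integral as a constant multiple of $s_0^{3-\frac{4}{n}-\gamma}$, and taking the square root gives the claimed power $s_0^{\frac{3-\gamma}{2} - \frac{2}{n}}$. Collecting constants yields the claim with some $C_1 = C_1(n,\gamma) \gt 0$ independent of $u_0$.

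The only real subtlety is verifying that the boundary term at $s = 0$ indeed vanishes (which requires both $\gamma \lt 2 - \frac{2}{n}$ and the boundedness of $w_s$ near $s = 0$ from Lemma~\ref{lm:ws_est}); everything else reduces to bookkeeping. The threshold $\gamma = 2 - \frac{4}{n}$ is precisely the point at which the weight $s^{1-\frac{4}{n}-\gamma}$ ceases to be integrable on $(0, s_0)$, which is what drives the dimensional restriction appearing in Theorem~\ref{th:main} when combined with the lower bound $\gamma \gt \frac{2(\kappa-1)}{\kappa}$ (or $\gamma \gt 1$ for $\kappa = 2$) from Lemma~\ref{lm:i4}.
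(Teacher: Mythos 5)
Your proposal is correct and follows essentially the same route as the paper's proof: integration by parts, discarding the nonnegative term and the vanishing boundary term (using $\gamma \lt 2-\frac2n$), then the key estimate $w_s \le \frac ws$ from Lemma~\ref{lm:ws_est} combined with the Cauchy--Schwarz inequality and Lemma~\ref{lm:beta}, with $\gamma \lt 2-\frac4n$ guaranteeing integrability of $s^{1-\gamma-\frac4n}$. The bookkeeping of exponents and the independence of the constant from $u_0$ match the paper's argument.
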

\begin{proof}
  For convenience, we fix $u_0$ as in \eqref{eq:main:reg_u0},
  albeit we emphasize that the constants below do not depend on $u_0$.
  An integration by parts gives
  \begin{align*}
     &\pe \int_0^{s_0} s^{2-\frac2n - \gamma} (s_0 - s) w_{ss} \ds \\
     &=   - \left(2 - \frac2n - \gamma\right) \int_0^{s_0} s^{1 - \frac2n - \gamma} (s_0 - s) w_s \ds
          + \int_0^{s_0} s^{2-\frac2n - \gamma} w_s \ds
          + \left[ s^{2-\frac2n - \gamma} (s_0 - s) w_s \right]_0^{s_0}
    \qquad \text{in $(0, \tmax)$}.
  \end{align*}
  Herein, the second term on the right hand side is positive
  and the last one is zero because of $\gamma \lt 2 - \frac4n \lt 2 - \frac2n$.

  Setting $c_1 \defs 2 - \frac2n - \gamma \gt 0$,
  we hence infer from \eqref{eq:ws_est:main} and Hölder's inequality that
  \begin{align*}
    &\pe  \int_0^{s_0} s^{2 - \frac2n - \gamma} (s_0 - s) w_{ss} \ds \\
    &\ge  - \left(2 - \frac2n - \gamma\right) \int_0^{s_0} s^{1 - \frac2n - \gamma} (s_0 - s) w_s \ds \\
    &\ge  - c_1 \int_0^{s_0} s^{\frac12 - \frac2n - \gamma} (s_0 - s) (w w_s)^\frac12 \ds \\
    &\ge  - c_1 \left( \int_0^{s_0} s^{1 - \gamma - \frac4n} (s_0 - s) \ds \right)^\frac12
            \left( \int_0^{s_0} s^{-\gamma} (s_0 - s) w w_s \ds \right)^\frac12
  \qquad \text{holds in $(0, \tmax)$}.
  \end{align*}
  Since $\gamma \lt 2 - \frac4n$ and hence $a \defs 1 - \gamma - \frac4n \gt -1$,
  Lemma~\ref{lm:beta} asserts that (with $B$ as in that lemma)
  \begin{align*}
      \left( \int_0^{s_0} s^{1 - \gamma - \frac4n} (s_0 - s) \ds \right)^\frac12
    = B^\frac12 s_0^{\frac{3-\gamma}{2} - \frac2n}
    \qquad \text{for all $s_0 \in (0, R^n)$},
  \end{align*}
  so that the statement follows by the definitions of $I_1$ and $I_2$.
\end{proof}

Next, for estimating the integrals $I_2$ and $I_3$ in \eqref{eq:phi:phi_ode},
we basically recall the corresponding results from \cite{WinklerFinitetimeBlowupLowdimensional2018}.
\begin{lemma}\label{lm:i2_i3}
  There exist $C_2, C_3 \gt 0$ such that for $u_0$ satisfying \eqref{eq:main:reg_u0}, we have
  \begin{alignat}{2}\label{eq:i2_i3:i2}
    I_2(s_0, t) &\ge C_2 s_0^{-(3-\gamma)} \phi^2(s_0, t)
    &&\qquad \text{for all $s_0 \in (0, R^n)$ and $t \in (0, \tmax)$}
  \intertext{and}\label{eq:i2_i3:i3}
    I_3(s_0, t) &\ge - C_3 \left(\intom u_0\right) s_0^{\frac{3-\gamma}{2}} I_2^\frac12(s_0, t)
    &&\qquad \text{for all $s_0 \in (0, R^n)$ and $t \in (0, \min\{1, \tmax\})$},
  \end{alignat}
  where $\phi$ is in \eqref{eq:phi:def_phi} and $I_2, I_3$ are defined in \eqref{eq:phi:phi_ode}.
\end{lemma}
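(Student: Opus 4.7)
For the lower bound \eqref{eq:i2_i3:i2}, my plan is to rewrite $w w_s = \frac12 (w^2)_s$ and integrate by parts in the integral defining $I_2$. The boundary term at $s = s_0$ vanishes thanks to the factor $(s_0-s)$, and the one at $s = 0$ vanishes because $w \sim s$ near the origin (by \eqref{eq:ws_est:w_sim_s}) combined with the standing restriction $\gamma \lt 2$. This produces
\begin{align*}
  I_2(s_0, t) = \frac{n\gamma}{2}\int_0^{s_0} s^{-\gamma-1}(s_0-s) w^2 \ds + \frac{n}{2}\int_0^{s_0} s^{-\gamma} w^2 \ds,
\end{align*}
both summands being nonnegative, so the first one alone serves as a lower bound. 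I would then apply the Cauchy--Schwarz inequality to $\phi$ after the splitting $s^{-\gamma}(s_0-s) w = \bigl[s^{-(\gamma+1)/2}(s_0-s)^{1/2} w\bigr] \cdot \bigl[s^{(1-\gamma)/2}(s_0-s)^{1/2}\bigr]$, which yields
\begin{align*}
  \phi^2(s_0, t) \le \left(\int_0^{s_0} s^{-\gamma-1}(s_0-s) w^2 \ds\right) \cdot \left(\int_0^{s_0} s^{1-\gamma}(s_0-s) \ds\right).
\end{align*}
The second factor equals $B s_0^{3-\gamma}$ by Lemma~\ref{lm:beta} (the exponent $1-\gamma \gt -1$ is admissible), and chaining both estimates supplies \eqref{eq:i2_i3:i2} with $C_2 = \frac{n\gamma}{2B}$.

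For \eqref{eq:i2_i3:i3}, I would split $w_s = \sqrt{w w_s}\cdot\sqrt{w_s/w}$ on $(0, s_0)$, which is meaningful since $w \gt 0$ there by Lemma~\ref{lm:ws_est}, and apply Cauchy--Schwarz to the integral appearing in $I_3$ along the lines
\begin{align*}
  s^{1-\gamma}(s_0-s) w_s = \bigl[s^{-\gamma/2}(s_0-s)^{1/2}\sqrt{w w_s}\bigr]\cdot\bigl[s^{1-\gamma/2}(s_0-s)^{1/2}\sqrt{w_s/w}\bigr].
\end{align*}
The first factor produces the quantity $I_2^{1/2}/\sqrt n$, while in the second factor the new pointwise estimate $w_s \le w/s$ from Lemma~\ref{lm:ws_est} lets me replace $w_s/w$ by $1/s$, reducing what remains to $\int_0^{s_0} s^{1-\gamma}(s_0-s)\ds = B s_0^{3-\gamma}$ via Lemma~\ref{lm:beta}. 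Finally, Lemma~\ref{lm:u_bdd_l1} together with $\lambda \ge 0$ and the restriction $t \lt 1$ yields $\ol m(t) \le \frac{\ure^{\lambda}}{|\Omega|}\intom u_0$, and accounting for the minus sign in the definition of $I_3$ delivers \eqref{eq:i2_i3:i3} with $C_3 = \sqrt{nB}\,\ure^{\lambda}/|\Omega|$.

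The main obstacle is really just bookkeeping: ensuring that every exponent of $s$ fed to Lemma~\ref{lm:beta} exceeds $-1$, and that the boundary terms in the integration by parts truly vanish --- both are guaranteed by the assumption $\gamma \in (0, 2)$ together with \eqref{eq:ws_est:w_sim_s}. Conceptually, the two parts rest on the same idea: Cauchy--Schwarz trades a potentially singular factor for $I_2$ plus an explicit power of $s_0$, and the role of the new estimate \eqref{eq:ws_est:main} is to tame the $w_s/w$ quotient appearing in the handling of $I_3$.
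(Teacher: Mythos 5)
Your proposal is correct, and it is worth noting how it relates to the paper's own (very short) proof. For \eqref{eq:i2_i3:i2}, the paper simply imports the inequality $\phi(s_0,t) \le c_1 s_0^{\frac{3-\gamma}{2}} I_2^{1/2}(s_0,t)$ by arguing ``as in'' Lemma~4.4 of \cite{WinklerFinitetimeBlowupLowdimensional2018} and squares it; you instead give a self-contained derivation via $w w_s = \tfrac12 (w^2)_s$, integration by parts (the boundary term at $s=0$ indeed vanishes because $w(s,t)\le s\,w_s(0,t)$ by \eqref{eq:ws_est:main} and $\gamma<2$), dropping the nonnegative term $\tfrac n2\int_0^{s_0}s^{-\gamma}w^2$, and Cauchy--Schwarz against the weight $s^{1-\gamma}(s_0-s)$, which is admissible in Lemma~\ref{lm:beta} since $1-\gamma>-1$. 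This has the small additional merit of visibly covering the whole range $\gamma\in(0,2)$ needed here (the cited lemma is stated for $\gamma<1$), at the cost of a $\gamma$-dependent constant $C_2=\tfrac{n\gamma}{2B}$, which is harmless since $\gamma$ is fixed. For \eqref{eq:i2_i3:i3}, the paper uses $w_s\le w/s$ to bound $\int_0^{s_0}s^{1-\gamma}(s_0-s)w_s\le\phi$ and then reuses the estimate behind \eqref{eq:i2_i3:i2}, whereas you apply Cauchy--Schwarz directly with the splitting $w_s=\sqrt{ww_s}\,\sqrt{w_s/w}$ and tame $w_s/w\le 1/s$ by the same key estimate \eqref{eq:ws_est:main}; the two routes are essentially equivalent in substance (both hinge on \eqref{eq:ws_est:main}, Lemma~\ref{lm:u_bdd_l1} with $t<1$, and Lemma~\ref{lm:beta}), yours just bypassing $\phi$. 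All exponent and boundary-term checks in your argument are valid; the only cosmetic remark is that positivity of $w$ on $(0,s_0)$, needed to form $\sqrt{w_s/w}$, is most directly a consequence of the positivity of $u$ from Lemma~\ref{lm:local_ex} (equivalently \eqref{eq:ws_est:w_sim_s}), which is indeed what you invoke.
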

\begin{proof}
  Arguing as in \cite[Lemma~4.4]{WinklerFinitetimeBlowupLowdimensional2018}, we obtain
  \begin{align}\label{eq:i2_i3:calc}
          \phi(s_0, t)
    &\le  c_1 s_0^{\frac{3-\gamma}{2}}
          I_2^\frac12(s_0, t)
    \qquad \text{for all $s_0 \in (0, R^n)$ and $t \in (0, \tmax)$}
  \end{align}
  for some $c_1 \gt 0$ independent of $u_0$.
  Taking both the left and the right hand side therein to the power $2$ already yields \eqref{eq:i2_i3:i2}.
  Moreover, as
  \begin{align*}
        I_3(s_0, t)
    =   - n \ol m(t) \int_0^{s_0} s^{1-\gamma} (s_0 - s) w_s(s, t) \ds
    \ge - \frac{n}{|\Omega|} \left( \intom u_0 \right) \ure^{\lambda t} \phi(s_0, t)
    \qquad \text{for $(s_0, t) \in (0, R^n) \times (0, \tmax)$}
  \end{align*}
  by Lemma~\ref{lm:u_bdd_l1} and \eqref{eq:ws_est:main},
  another consequence of \eqref{eq:i2_i3:calc} is \eqref{eq:i2_i3:i3}.
\end{proof}

As a final preparation, we note that, under certain circumstances, $\phi(s_0, 0)$ can be shown to be sufficiently large.
\begin{lemma}\label{lm:phi_0}
  For every $m_1 \gt 0$, there exists $C_0 \gt 0$ with the following property:
  Let $s_0 \in (0, R^n)$, set $s_1 \defs \frac{s_0}{4}$ as well as $r_1 \defs s_1^\frac1n$
  and suppose that $u_0$ fulfills \eqref{eq:main:reg_u0} as well as $\int_{B_{r_1}(0)} u_0 \ge m_1$.
  Then $\phi(s_0, 0) \ge C_0 s_0^{2-\gamma}$.
\end{lemma}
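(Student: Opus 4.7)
The plan is a direct lower bound on the integral defining $\phi(s_0, 0)$, exploiting that the mass of $u_0$ is concentrated in $B_{r_1}(0)$, so the mass accumulation function $w(\cdot, 0)$ is bounded below by a positive constant already at $s = s_1$. Since $u_0 \ge 0$, the map $s \mapsto w(s, 0)$ is nondecreasing, which makes this lower bound propagate to all $s \ge s_1$.

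More concretely, first I would discard the contribution of $(0, s_1)$ in the definition \eqref{eq:phi:def_phi} to write
\begin{align*}
  \phi(s_0, 0) \ge \int_{s_1}^{s_0} s^{-\gamma}(s_0 - s) w(s, 0) \ds.
\end{align*}
Next, using the definition \eqref{eq:def_w:def} of $w$ together with the radial symmetry of $u_0$, one has
\begin{align*}
  w(s_1, 0) = \int_0^{r_1} \rho^{n-1} u_0(\rho) \drho = \frac{1}{n|B_1(0)|} \int_{B_{r_1}(0)} u_0 \ge \frac{m_1}{n|B_1(0)|},
\end{align*}
and since $w_s \ge 0$ (by \eqref{eq:def_w:w_s=u} and positivity of $u_0$), this gives $w(s, 0) \ge \frac{m_1}{n|B_1(0)|}$ for all $s \in [s_1, s_0]$.

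Finally, I would compute the remaining integral by the substitution $s = s_0 \sigma$, which yields
\begin{align*}
  \int_{s_1}^{s_0} s^{-\gamma}(s_0 - s) \ds = s_0^{2-\gamma} \int_{1/4}^{1} \sigma^{-\gamma}(1 - \sigma) \dsigma \sfed B' s_0^{2-\gamma},
\end{align*}
where $B' \in (0, \infty)$ since the integrand is continuous on the compact interval $[\tfrac14, 1]$ and positive on $[\tfrac14, 1)$. Combining the three displays with $C_0 \defs \frac{m_1 B'}{n|B_1(0)|}$ gives the claim. There is no real obstacle in the argument; the only subtlety is to isolate the constant $C_0$ so that it depends only on $m_1$ (and on $\gamma$, $n$, $R$ through quantities already fixed in the section), and not on $s_0$ or the particular choice of $u_0$ beyond the imposed mass bound.
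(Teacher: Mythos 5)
Your argument is correct and is exactly the idea the paper invokes (it merely cites \cite[estimate~(5.5)]{WinklerFinitetimeBlowupLowdimensional2018} and points to the monotonicity of $w(\cdot,0)$ coming from $u_0 \ge 0$): discard $(0,s_1)$, bound $w(s,0) \ge w(s_1,0) \ge \frac{m_1}{n|B_1(0)|}$ on $[s_1,s_0]$, and scale the remaining integral to get $B' s_0^{2-\gamma}$ with $B' = \int_{1/4}^1 \sigma^{-\gamma}(1-\sigma)\,\mathrm d\sigma$. The resulting constant $C_0$ depends only on $m_1$, $n$ and the fixed $\gamma$, as required, so nothing is missing.
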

\begin{proof}
  See \cite[estimate~(5.5)]{WinklerFinitetimeBlowupLowdimensional2018};
  the main idea is to use the monotonicity of $w_0$ which in turn is implied by nonnegativity of $u_0$.
\end{proof}

A combination of the results obtained above now reveals that for initial data whose mass is sufficiently concentrated near the origin,
the corresponding solution cannot exist globally in time.
Again, the argument is not too different from \cite{WinklerBlowupHigherdimensionalChemotaxis2011} or \cite{WinklerFinitetimeBlowupLowdimensional2018},
but we choose to give it nonetheless in order to show that $s_0$ and $u_0$ can be chosen in such a way
that $\phi$ would blow up in finite time if $(u, v)$ were a global solution.
\begin{lemma}\label{lm:tmax_le_12}
  Let $m_0 \gt m_1 \gt 0$ and suppose that \eqref{eq:main:cond} holds.
  There exists $r_1 \in (0, R)$ such that whenever $u_0$ fulfills \eqref{eq:main:reg_u0} and \eqref{eq:main:mass_u0},
  then $\tmax \le \frac12$.
\end{lemma}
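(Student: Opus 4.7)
The plan is to combine the pointwise estimates for $I_1,I_2,I_3,I_4$ from Lemmata~\ref{lm:i1}, \ref{lm:i2_i3} and \ref{lm:i4} into a scalar Riccati-type differential inequality for $\phi(s_0,\cdot)$, and then use Lemma~\ref{lm:phi_0} to arrange that $\phi(s_0,0)$ sits well above the balance threshold of this ODE, forcing $\phi(s_0,\cdot)$ to blow up before $t=\tfrac12$. The first task is to fix $\gamma$: in case~\eqref{eq:main:cond:1}, the conditions $\kappa<2$ and $\kappa<n/2$ translate respectively to $\tfrac{2(\kappa-1)}{\kappa}<1$ and $\tfrac{2(\kappa-1)}{\kappa}<2-\tfrac{4}{n}$, so one may choose $\gamma\in(\tfrac{2(\kappa-1)}{\kappa},1)\cap(0,2-\tfrac{4}{n})$, which simultaneously meets the hypotheses of Lemmata~\ref{lm:i4}(ii) and~\ref{lm:i1}. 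In case~\eqref{eq:main:cond:2}, the assumption $\mu<\tfrac{n-4}{n}$ is exactly what makes $(1+\mu,\,2-\tfrac{4}{n})$ nonempty, permitting a choice $\gamma>1+\mu$ with $\gamma<2-\tfrac{4}{n}$ as required by Lemmata~\ref{lm:i4}(i) and~\ref{lm:i1}.

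With $\gamma$ fixed, I would assemble \eqref{eq:phi:phi_ode} as follows. In case~\eqref{eq:main:cond:2}, Lemma~\ref{lm:i4}(i) yields $I_2+I_4\ge c_0 I_2$ with $c_0\defs 1-\tfrac{\mu}{\gamma-1}>0$; in case~\eqref{eq:main:cond:1}, Lemma~\ref{lm:i4}(ii) bounds $|I_4|$ by a constant times $s_0^{(2-\kappa)/2} I_2^{\kappa/2}$, which Young's inequality with conjugate exponents $2/\kappa$ and $2/(2-\kappa)$ reduces to $\tfrac14 I_2+\mathrm{const}\cdot s_0$ (the resulting power of $s_0$ collapsing to $\tfrac{2-\kappa}{2}\cdot\tfrac{2}{2-\kappa}=1$). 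Two further applications of Young's inequality turn the $I_1,I_3$ bounds of Lemmata~\ref{lm:i1} and~\ref{lm:i2_i3} into $\tfrac14 I_2+\mathrm{const}\cdot s_0^{3-\gamma-4/n}$ and $\tfrac14 I_2+\mathrm{const}\cdot s_0^{3-\gamma}$, respectively, both valid on $(0,\min\{1,\tmax\})$. Substituting $I_2\ge C_2 s_0^{-(3-\gamma)}\phi^2$ from Lemma~\ref{lm:i2_i3} then yields, for $s_0\in(0,\min\{1,R^n\})$,
\[
  \phi_t(s_0,t)\ \ge\ K_1 s_0^{-(3-\gamma)}\phi^2(s_0,t)\ -\ K_2 s_0^{\beta},\qquad t\in(0,\min\{1,\tmax\}),
\]
with $\beta=1$ in case~\eqref{eq:main:cond:1} and $\beta=3-\gamma-\tfrac{4}{n}$ in case~\eqref{eq:main:cond:2} (the latter dominating $s_0^{3-\gamma}$ because $s_0\le 1$).

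Lemma~\ref{lm:phi_0} supplies the starting point: setting $r_1\defs(s_0/4)^{1/n}$, the assumption \eqref{eq:main:mass_u0} gives $\phi(s_0,0)\ge C_0 s_0^{2-\gamma}$. The decisive exponent comparison $\beta>1-\gamma$ --- which holds in case~\eqref{eq:main:cond:1} because $\gamma>0$ and in case~\eqref{eq:main:cond:2} because $\beta-(1-\gamma)=2-\tfrac{4}{n}>0$ for $n\ge 5$ --- is equivalent to $2-\gamma<\tfrac12(3-\gamma+\beta)$, so for all sufficiently small $s_0$ the starting value $C_0 s_0^{2-\gamma}$ strictly exceeds the balance threshold $\sqrt{2K_2/K_1}\,s_0^{(3-\gamma+\beta)/2}$ at which the right-hand side of the displayed inequality vanishes. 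Since that right-hand side is strictly increasing in $\phi$ above this threshold, a standard continuity/bootstrap argument propagates the excess to every $t\in[0,\min\{1,\tmax\})$, reducing the inequality to $\phi_t\ge\tfrac{K_1}{2}s_0^{-(3-\gamma)}\phi^2$. Integrating this Riccati inequality shows that $\phi(s_0,\cdot)$ must explode by time $2s_0^{3-\gamma}/(K_1\phi(s_0,0))\le 2s_0/(K_1 C_0)$, which is at most $\tfrac12$ as soon as $s_0\le K_1C_0/4$. Fixing such a small $s_0$ and the corresponding $r_1$ forces $\tmax\le\tfrac12$. The main obstacle is the exponent bookkeeping, and in particular the observation that it is precisely the sharper estimate $w_s\le w/s$ from Lemma~\ref{lm:ws_est} which permits $\gamma>1$ in case~\eqref{eq:main:cond:2}, thereby letting us reach the critical exponent $\kappa=2$ whenever $n\ge 5$.
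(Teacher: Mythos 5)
Your proposal follows essentially the same route as the paper: the same choice of $\gamma$ in the two regimes ($\gamma\in(\tfrac{2(\kappa-1)}{\kappa},\min\{1,2-\tfrac4n\})$ resp.\ $\gamma\in(1+\mu,2-\tfrac4n)$), absorption of $I_1$, $I_3$, $I_4$ into a positive fraction of $I_2$ via Young's inequality, the bound $I_2\ge C_2 s_0^{-(3-\gamma)}\phi^2$, Lemma~\ref{lm:phi_0} for the initial value, and a Riccati-type comparison on $(0,\min\{1,\tmax\})$; your threshold/bootstrap conclusion is only a cosmetic variant of the paper's shifted comparison with $d_3=(d_2/d_1)^{1/2}$. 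The one detail to repair is the constant bookkeeping in case~\eqref{eq:main:cond:2}: absorbing the $I_1$- and $I_3$-terms into $\tfrac14 I_2$ each can annihilate the surviving coefficient, since $c_0=1-\tfrac{\mu}{\gamma-1}$ may be smaller than $\tfrac12$ when $\mu$ is close to $\tfrac{n-4}{n}$; one should instead absorb into, say, $\tfrac{c_0}{4}I_2$ (Young's inequality permits an arbitrarily small coefficient at the cost of a larger constant on the $s_0$-power), which is precisely how the paper preserves the margin $\tfrac{\eps}{\mu+\eps}$.
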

\begin{proof}
  Let us begin by fixing some parameters.
  If \eqref{eq:main:cond:1} holds,
  then $\kappa \in (1, \frac n2)$ and hence
  \begin{align*}
    \frac{2(\kappa-1)}{\kappa} - \left(2-\frac4n\right) \lt \frac{2 \cdot \frac{n-2}{2}}{\frac n2} - \frac{2(n-2)}{n} = 0.
  \end{align*}
  As additionally $\kappa \lt 2$,
  we may hence choose $\gamma \in (\frac{2(\kappa-1)}{\kappa}, \min\{2-\frac4n, 1\})$.
  We moreover fix an arbitrary $\eps \gt 0$ and apply Lemma~\ref{lm:i4}~(ii)
  as well as Young's inequality (with exponents $\frac{2}{2-\kappa}$, $\frac{2}{\kappa}$)
  to obtain $C_4' \gt 0$ with
  \begin{align}\label{eq:tmax_le_12:i4}
        I_4(s_0, t) \ge - \frac{\mu}{\mu+\eps} I_2(s_0, t) - C_4' s_0
        \qquad \text{for all $s_0 \in (0, \min\{1, R^n\})$ and $t \in (0, \tmax)$},
  \end{align}
  whenever $u_0$ satisfies \eqref{eq:main:reg_u0}
  and where $I_2$ and $I_4$ are as in \eqref{eq:phi:phi_ode}.

  Suppose now that on the other hand \eqref{eq:main:cond:2} holds.
  Because of $\mu \in (0, \frac{n-4}{n})$, we may then choose $\gamma \in (1 + \mu, 2 - \frac4n)$.
  Setting moreover $\eps \defs \gamma - 1 - \mu \gt 0$, an application of Lemma~\ref{lm:i4}~(i) reveals that
  \eqref{eq:tmax_le_12:i4} holds also in this case (with $C_4' \defs 0$ and for all $u_0$ complying with \eqref{eq:main:reg_u0}).

  In both cases, the definition of $\gamma$ entails $0 \lt \gamma \lt 2 - \frac4n$,
  hence by Lemma~\ref{lm:phi}, Lemma~\ref{lm:i1}, \eqref{eq:i2_i3:i3}, \eqref{eq:tmax_le_12:i4}, Young's inequality and \eqref{eq:i2_i3:i2},
  there are $c_1, c_2 \gt 0$ such that
  \begin{align}\label{eq:main_proof:phi_t}
          \phi_t(s_0, t)
    &\ge  I_1(s_0, t) + I_2(s_0, t) + I_3(s_0, t) + I_4(s_0, t) \notag \\
    &\ge  \frac{\eps}{\mu + \eps} I_2(s_0, t)
          - \left(
            C_1 s_0^{\frac{3-\gamma}{2} - \frac2n}
            + C_3 m_0 s_0^{\frac{3-\gamma}{2}}
            \right) I_2^\frac12(s_0, t) 
          - C_4' s_0 \notag \\
    &\ge  c_1 I_2(s_0, t)
          - c_2 s_0^{\min\{3-\gamma - \frac4n, 3-\gamma, 1\}} \notag \\
    &\ge  C_2 c_1 s_0^{-(3-\gamma)} \phi^2(s_0, t)
          - c_2 s_0
  \end{align}
  for all $s_0 \in (0, \min\{1, R^n\})$, $t \in (0, \min\{1, \tmax\})$ and $u_0$ satisfying \eqref{eq:main:reg_u0} as well as $\intom u_0 = m_0$,
  where $\phi, I_1, \dots, I_4$ are as in Lemma~\ref{lm:phi}, $C_1$ is as in Lemma~\ref{lm:i1} and $C_2, C_3$ are as in Lemma~\ref{lm:i2_i3}.
  
  For $s_0 \gt 0$, we set $c_3 \defs C_2 c_1$,
  \begin{align*}
    d_1(s_0) \defs c_3 s_0^{-(3-\gamma)}, \quad
    d_2(s_0) \defs c_2 s_0, \quad
    d_3(s_0) \defs \left( \frac{d_2(s_0)}{d_1(s_0)} \right)^\frac12 \quad \text{and} \quad
    \phi_0(s_0) \defs C_0 s_0^{2-\gamma},
  \end{align*}
  where $C_0$ is as in Lemma~\ref{lm:phi_0}.
  We observe that $d_1(s_0) \ra \infty$ for $s_0 \sea 0$ since $3-\gamma \gt 1 \gt 0$.
  Therefore, noting further that
  \begin{align*}
          \frac12 ( 1 + 3 - \gamma )
    &=    2 - \frac{\gamma}{2}
    \gt   2 - \gamma,
  \end{align*}
  we may also fix $s_0 \in (0, \min\{1, R^n\})$ so small that
  \begin{align}\label{eq:main_proof:phi0_large}
        \phi_0(s_0)
    \ge d_3(s_0)
        + \frac{2}{d_1(s_0)}.
  \end{align}
  Moreover, we now fix $u_0$ not only complying with \eqref{eq:main:reg_u0} but also with \eqref{eq:main:mass_u0} for $r_1 \defs (\frac{s_0}{4})^\frac1n$
  and will show that the corresponding solution given by Lemma~\ref{lm:local_ex} blows up in finite time.
  From \eqref{eq:main_proof:phi_t} and Lemma~\ref{lm:phi_0}, we infer that $\phi(s_0, \cdot)$ satisfies
  \begin{align}\label{eq:main_proof:phi_ode}
    \begin{cases}
      \phi_t(s_0, t) \ge d_1(s_0) \phi^2(s_0, t) - d_2(s_0) & \text{for all $t \in (0, \min\{1, \tmax\})$}, \\
      \phi(s_0, 0) \ge \phi_0(s_0).
    \end{cases}
  \end{align}

  Since \eqref{eq:main_proof:phi0_large} implies $\phi_0(s_0) \ge d_3(s_0)$
  and because of $d_1(s_0) d_3(s_0)^2 - d_2(s_0) = 0$,
  the comparison principle and \eqref{eq:main_proof:phi_ode} assert $\phi(s_0, t) \ge d_3(s_0)$ for all $t \in (0, \min\{1, \tmax\})$,
  so that by \eqref{eq:main_proof:phi_ode} we have
  \begin{align*}
          \phi_t(s_0, t)
    &\ge  d_1(s_0) \left( \phi^2(s_0, t) - d_3(s_0)^2 \right) \\
    &\ge  d_1(s_0) \left( \phi(s_0, t) - d_3(s_0) \right)^2
    \qquad \text{for all $t \in (0, \min\{1, \tmax\})$}.
  \end{align*}
  Dividing by the right hand side therein yields upon an integration in time
  \begin{align*}
          t
    &=    \int_0^t 1 \ds
     \le  \int_{\phi(s_0, 0)}^{\phi(s_0, t)} \frac{\dsigma}{d_1(s_0) (\sigma - d_3(s_0))^2}
     \le  \left[ -\frac{1}{d_1(s_0) (\sigma - d_3(s_0))} \right]_{\phi_0(s_0)}^{\infty}
     \le  \frac12
  \end{align*}
  for all $t \in (0, \min\{1, \tmax\})$, implying $\tmax \le \frac12$.
\end{proof}

Finally, we conclude that Theorem~\ref{th:main} is now merely a direct consequence of the lemmata above.
\begin{proof}[Proof of Theorem~\ref{th:main}]
  Lemma~\ref{lm:tmax_le_12} asserts that there is $r_1 \in (0, R)$ such that under the conditions of Theorem~\ref{th:main},
  the maximal existence time $\tmax$ is finite.
  By Lemma~\ref{lm:ur_le_0} and Lemma~\ref{lm:local_ex},
  this then implies
  $u(0, t) = \|u(\cdot, t)\|_{\leb\infty} \ra \infty$ as $t \nea \tmax$.
\end{proof}

\section*{Acknowledgments}
\small The author is partly supported by the German Academic Scholarship Foundation
and by the Deutsche Forschungsgemeinschaft within the project \emph{Emergence of structures and advantages in
cross-diffusion systems}, project number 411007140.
\footnotesize

\end{document}